\newcommand\myurl[1]{\url{#1}}
\newtheorem{thm}{Theorem}
\newtheorem{prop}[thm]{Proposition}
\newtheorem{lem}{Lemma}
\newtheorem{conj}[thm]{Conjecture}
\newtheorem{quest}[thm]{Question}
\theoremstyle{definition}
\newtheorem{defn}[thm]{Definition}
\theoremstyle{remark}
\newtheorem{rem}[thm]{Remark}
\DeclareMathOperator\up{up}
\def\nn{\mskip12mu}
\title{Arithmetic of the Fabius function}
\author{J. Arias de Reyna}
\address{Univ.~de Sevilla \\
Facultad de Matem\'aticas \\
c/ Tarfia, sn
 \\
41012-Sevilla \\
Spain} 
\email{arias@us.es}
\date{\today}
\begin{document}

\newcommand{\N}{{\mathbb N}}
\newcommand{\R}{{\mathbb R}}
\newcommand{\C}{{\mathbb C}}
\newcommand{\Z}{{\mathbb Z}}
\newcommand{\Q}{{\mathbb Q}}
\newcommand{\arctanh}{\mathop{\rm arctanh }}
\def\Re{\operatorname{Re}}
\def\Im{\operatorname{Im}}
\def\Hermite{\operatorname{\mathcal H}}
\def\supp{\operatorname{supp}}
\def\den{\operatorname{den}}

\newfont{\cmbsy}{cmbsy10}
\newcommand{\Orden}{\mathop{\hbox{\cmbsy O}}\nolimits}

\begin{abstract} 
The Fabius function $F(x)$ takes rational values at dyadic points. We study the 
arithmetic of these rational numbers. In particular, we define two sequences
of natural numbers that determine these rational numbers. Using these sequences 
we solve 
a conjecture raised  in MathOverflow by Vladimir Reshetnikov.
We determine the dyadic valuation of $F(2^{-n})$, showing that 
$\nu_2(F(2^{-n}))=-\binom{n}{2}-\nu_2(n!)-1$.
We give the proof  (Proposition \ref{P:Res}) of  a formula that allows an efficient computation 
of exact or approximate values of $F(x)$.  

The Fabius function was defined in 1935 by 
Jessen and Wintner and has been independently defined  at least 
six times since. 
We attempt to unify notations related to the Fabius function. 
\end{abstract}

\maketitle

\section{Introduction.}
The Fabius function is a natural object. It is not surprising that it has been
independently  defined several times. As far as known it was defined independently 
in the following cases:
\begin{enumerate}
\item In 1935 by  B. Jessen and A. Wintner  \cite{MR1501802}*{Ex.~5, p.~62} (English). In five lines
they showed that it is infinitely differentiable.

\item In 1966 by J. Fabius \cite{MR0197656} (English); considered  as the distribution function 
of a random variable. 

\item In 1971 by  V. A.~Rvach{\"e}v   \cite{MR0296237} (Ukrainian); defined 
as a solution to the equation
$y'(x)=2(y(2x+1)-y(2x-1))$.

\item In 1981 by G. Kh. Kirov and G. A.~Totkov \cite{MR705073} (Bulgarian).

\item In 1982 by J. Arias de Reyna \cite{MR666839} (Spanish); defined  in the same way as V. A.~Rvach{\"e}v. 

\item In 1985  by R. Schnabl \cite{MR1031820} (German). 
\end{enumerate}

Today there are many papers dealing with the Fabius function, due mainly to {V. A.~Rvach{\"e}v} \cite{MR0296237}--\cite{MR1050928}, (a 
summary of his result can be found in  \cite{MR1050928}). Recently   \cite{MR666839}
has been translated to English and posted  in 
\href{http://arxiv.org/abs/1702.05442}{arXiv:1702.05442}. In this paper  the function is 
defined and its main properties are proven. 

The function $\up(x)$ takes rational values at dyadic points \cite{MR0330388}.%
\footnote{This Russian paper was not accessed directly. Rather, it came to notice
through the works of  V. A.~Rvach{\"e}v \cite{MR1050928}. In \cite{MR666839} it was
independently proved that the values of $\up(k/2^n)$ are rational numbers.}
The object of this paper are
the arithmetical properties of these rational values.  Some of the results 
are also contained in \cite{MR666839}. In particular, we solve here a question 
posed in MathOverflow by Vladimir Reshetnikov.

\subsection{Some notations.} 
We consider  two functions. Firstly, the Rvach{\"e}v function $\up(x)$ with support in $[-1,1]$ that satisfies on all
$\R$ equation \eqref{E:main}.  V. A.~Rvach{\"e}v denotes it by $\up(x)$; it is the same function 
that is called
$\varphi(x)$ in the translation of \cite{MR666839}. Secondly, the Fabius  function $F(x)$, 
defined by Fabius, that satisfies $F'(x)=2F(2x)$  for $x>0$. This function coincides with 
the function $\theta(x)$ in \cite{MR666839}, except that  in \cite{MR666839}
it is defined on all of $\R$, putting $F(x)=0$ for $x<0$. Therefore $F'(x)=2F(2x)$  is true 
for all $x\in\R$. 

There is a connection between these two functions (see \cite{MR666839}*{Theorem 4}). 
Specifically, 
\begin{equation}\label{E, F-def}
F(x)=\sum_{n=0}^\infty (-1)^{w(n)}\up(x-2n-1),
\end{equation}
where $w(n)$ is the sum of the digits of $n$ expressed in base $2$. 

\begin{figure}[htbp]
\begin{center}
\includegraphics[width=\textwidth]{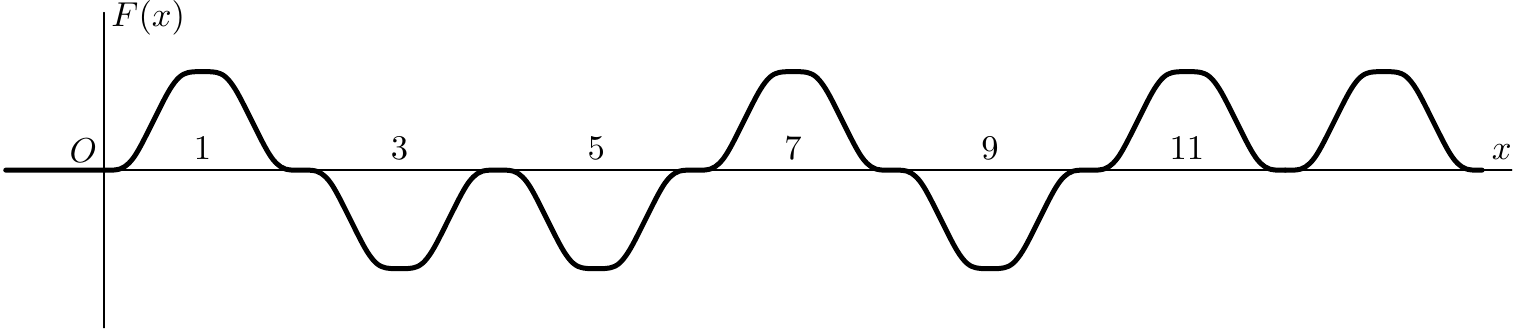}
\label{fabius}
\end{center}
\end{figure}

The Fabius function $F\colon[0,1]\to[0,1]$ is increasing with $F(0)=0$ and $F(1)=1$. 
Rvach{\"e}v's function $\up\colon\R\to\R$ can be defined in terms of $F(t)$ by 
\[\up(t)=\begin{cases} F(t+1) & \text{for $-1\le t\le0$},\\
F(1-t) & \text{for $0\le t\le 1$},\\
0 & \text{for $|t|>1$}.
\end{cases}
\]
The function $\up(t)$ is an infinitely differentiable function of compact 
support $[-1,1]$. 
The relation with the Fabius function is only given for reference. 
The function $\up(t)$ has a nice independent
definition, see Theorem 1 in \cite{MR666839}. 

We use $w(n)$ to denote the sum of the digits of $n$ expressed in binary notation. 
We use $\nu_2(r)$ is the dyadic  valuation of the rational number $r$, the exponent of $2$ in the 
factorization of $r$. 

We will define two sequences of rational numbers. Firstly,  $(c_n)$ the moments of Rvach{\"e}v's function $\up(x)$. Secondly, 
$(d_n)$ the half moments of Rvach{\"e}v's function. These are also related to the values
of the Fabius 
function $F(x)$, see equation \eqref{E:dnint}.  There are two sequences of natural numbers associated to these rational 
numbers: $F_n$ associated to the moments $c_n$,  and $G_n$ associated to the half moments. 

A further  two sequences are Reshetnikov's numbers $R_n$, and the sequence $D_n$ of
the  common denominators of
the values  $F(a/2^n)$ of the Fabius function at dyadic numbers with denominator $2^n$.

\section{Properties of the Fabius function.}

The Fabius function $\up(x)$ is a simple
infinitely differentiable function of compact support. Visualising it as a bump function,
it   increases from $0$ to a certain value and then decreases symmetrically 
to $0$. Its derivative is positive in the first interval and then negative in the next.
Therefore it may be  that the derivative is obtained from two 
homothetically deformed copies of the function. 

\begin{wrapfigure}{r}{0.40\textwidth}
\includegraphics[width=0.38\textwidth]{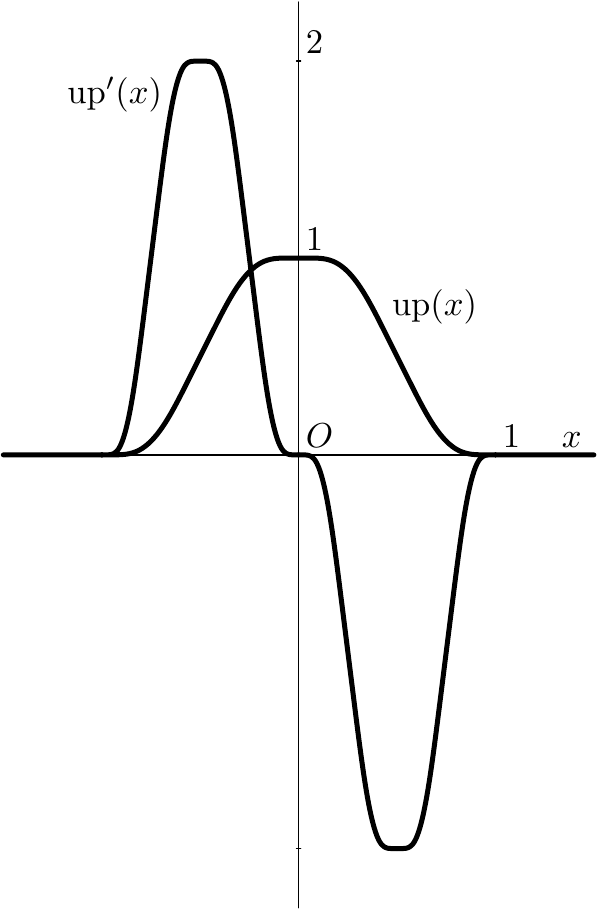}
\end{wrapfigure} 
\leavevmode%
In fact $\up(x)$ is infinitely differentiable of 
compact support equal to $[-1,1]$, and  for all $x\in\R$ satisfies  the equation

\begin{equation}\label{E:main}
\mskip-200mu \up'(x)=2\bigl(\up(2x+1)-\up(2x-1)\bigr),
\end{equation}
with $\up(0)=1$.

The number $2$ is the only factor giving an interesting solution to the equation.
The Fabius function \eqref{E, F-def} appears when one attempts to compute the 
derivatives $\up^{(n)}(x)$.
We have  $F'(x)=2F(2x)$ for all $x\in\R$. 
Therefore  (\cite{MR666839}*{eq.~(22)})
\begin{equation}\label{E:FabiusD}
\mskip-220mu 
F^{(k)}(x)=2^{\binom{k+1}{2}}F(2^k x),
\end{equation}
and
\begin{equation}
\mskip-220mu 
\up^{(n)}(x)=2^{\binom{n+1}{2}}F(2^n(x+1)), \quad -1\le x\le 1.
\end{equation}
Its Fourier transform is an entire function
\begin{equation}
\widehat{\up}(z)=\prod_{n=0}^\infty \frac{\sin(\pi z/2^n)}
{\pi z/2^n}.
\end{equation}
The inversion formula yields the following expression
\begin{equation}
\up(x)=\int_\R\widehat{\up}(t)e^{2\pi i t x}\,dt.
\end{equation}

\section{The values of the Fabius function and two sequences of rational numbers.}

\subsection{The sequence of rational  numbers $c_n$.}

The Fourier transform  $\widehat\up(z)$ is an entire function. 
In \cite{MR666839}*{eq.~(4) and (6)}  it is shown that  
\begin{equation}\label{hatphi}
\widehat\up(z)=\prod_{n=0}^\infty \frac{\sin(\pi z/2^n)}{\pi z/2^n}=\sum_{n=0}^\infty (-1)^n \frac{c_n}{(2n)!} (2\pi z)^{2n}.
\end{equation}
These coefficients $c_n$ are very important for the study of the Fabius function.
They are given by the integrals \cite{MR666839}*{eq.~(34)}
\begin{equation}\label{D:cn}
c_n=\int_\R t^{2n}\up(t)\,dt,
\end{equation}
and satisfy the recurrence \cite{MR666839}*{eq.~(7)}
\begin{equation}\label{E:cr}
(2n+1)2^{2n}c_n=\sum_{k=0}^{n}\binom{2n+1}{2k}c_k.
\end{equation}

The next Proposition is contained in \cite{MR666839} but is a little 
cryptic, so it is  expanded here.
\begin{prop}\label{PD:F}
The numbers $c_n$ satisfy the recurrence
\begin{equation}\label{E:cr2}
c_0=1,\quad (2n+1)(2^{2n}-1)c_n=\sum_{k=0}^{n-1}\binom{2n+1}{2k}c_k.
\end{equation}
For any natural number $n$ we have
\begin{equation}\label{E:cf}
c_n=\frac{F_n}{(2n+1)!!}\prod_{\nu=1}^n(2^{2\nu}-1)^{-1},
\end{equation}
where $F_n$ are natural numbers.
\end{prop}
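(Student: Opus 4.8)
The plan is to prove the two assertions in sequence: the first is a one-line rearrangement of \eqref{E:cr}, and the second is a strong induction whose only genuine content is a divisibility check.

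To obtain \eqref{E:cr2} from \eqref{E:cr}, I would isolate the $k=n$ term of the sum in \eqref{E:cr}, namely $\binom{2n+1}{2n}c_n=(2n+1)c_n$, and transfer it to the left-hand side. This turns $(2n+1)2^{2n}c_n=\sum_{k=0}^{n}\binom{2n+1}{2k}c_k$ into $(2n+1)(2^{2n}-1)c_n=\sum_{k=0}^{n-1}\binom{2n+1}{2k}c_k$. The initial value $c_0=1$ is immediate from \eqref{D:cn} (equivalently, from the constant term of \eqref{hatphi}).

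For \eqref{E:cf} I would \emph{define} $F_n:=(2n+1)!!\,\prod_{\nu=1}^n(2^{2\nu}-1)\,c_n$, so that the displayed formula holds by construction, and then prove $F_n\in\N$ by induction on $n$, the base case being $F_0=c_0=1$. In the inductive step, solve \eqref{E:cr2} for $c_n$, multiply by $(2n+1)!!\prod_{\nu=1}^n(2^{2\nu}-1)$, and use $(2n+1)!!=(2n+1)(2n-1)!!$ together with $\prod_{\nu=1}^n(2^{2\nu}-1)=(2^{2n}-1)\prod_{\nu=1}^{n-1}(2^{2\nu}-1)$ to cancel the factor $(2n+1)(2^{2n}-1)$. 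Substituting $c_k=F_k\big/\bigl((2k+1)!!\prod_{\nu=1}^{k}(2^{2\nu}-1)\bigr)$ then yields
\[
F_n=\sum_{k=0}^{n-1}\binom{2n+1}{2k}\,\frac{(2n-1)!!}{(2k+1)!!}\,\Bigl(\prod_{\nu=k+1}^{n-1}(2^{2\nu}-1)\Bigr)F_k .
\]

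The point to verify — essentially the only one — is that each coefficient of $F_k$ above is a nonnegative integer: $\binom{2n+1}{2k}\in\Z$; the ratio $(2n-1)!!/(2k+1)!!$ equals the integer $(2k+3)(2k+5)\cdots(2n-1)$ since $2k+1\le 2n-1$ for $k\le n-1$; and $\prod_{\nu=k+1}^{n-1}(2^{2\nu}-1)$ is a product of integers (empty, hence $1$, when $k=n-1$). With the $F_k$ integers by the induction hypothesis, $F_n$ is a sum of products of nonnegative integers, so $F_n\in\Z$; and $F_n>0$ because $c_n=\int_\R t^{2n}\up(t)\,dt>0$ while the normalizing factors are positive. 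Hence $F_n\in\N$. I foresee no real obstacle — the double-factorial and binomial bookkeeping is the only step demanding a little care, and it closes cleanly.
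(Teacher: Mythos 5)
Your proposal is correct and follows essentially the same route as the paper: subtract the $k=n$ term of \eqref{E:cr}, define $F_n$ by \eqref{E:Fnum_def}, and derive the recurrence \eqref{Rec:F} whose coefficients are manifestly nonnegative integers. The only difference is that you spell out the integrality of $(2n-1)!!/(2k+1)!!$ and the positivity of $c_n$ explicitly, which the paper leaves implicit.
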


\begin{proof}
The last term in the sum of \eqref{E:cr} is equal to $(2n+1)c_n$. We obtain 
\eqref{E:cr2} from \eqref{E:cr} by adding $-(2n+1)c_n$ to both sides. 
We have $c_0=\widehat\up(0)$ and this is equal to $1$.  Then the recurrence 
can be used to compute the numbers:

\[1,\nn\frac{1}{9},\nn\frac{19}{675},\nn\frac{583}{59\,535},
\nn\frac{132\,809}{32\,531\,625},
\nn\frac{46\,840\,699}{24\,405\,225\,075},\nn\frac{4\,068\,990\,560\,161}{4\,133\,856\,862\,760\,625},\dots\]
The recurrence shows that $c_n\ge0$ for all $n$.

Equation \eqref{E:cf} follows from the definition
\begin{equation}\label{E:Fnum_def}
F_n:= c_n(2n+1)!!\,\prod_{\nu=1}^n (2^{2\nu}-1).
\end{equation}
It remains to show that the numbers $F_n$ are natural.
We understand that an empty product is equal to $1$, so that $F_0=1$. For 
$n\ge1$  substituting \eqref{E:cf} in the recurrence \eqref{E:cr2} yields, for $n\ge1$,
\[\frac{F_n}{(2n-1)!!}\prod_{\nu=1}^{n-1}(2^{2\nu}-1)^{-1}=
\sum_{k=0}^{n-1}\binom{2n+1}{2k}\frac{F_k}{(2k+1)!!}\prod_{\nu=1}^{k}(2^{2\nu}-1)^{-1},
\]
so that
\begin{equation}\label{Rec:F}
F_n=\sum_{k=0}^{n-1}F_k\binom{2n+1}{2k}\frac{(2n-1)!!}{(2k+1)!!}
\prod_{\nu=k+1}^{n-1}(2^{2\nu}-1),\qquad n\ge1.
\end{equation}
This expression for $F_n$ shows by induction  that all the $F_n$ are natural numbers. 
The first $F_n$'s are
\[1,\ 1,\  19,\  2\,915,\  2\,788\,989,\  14\,754\,820\,185,\  402\,830\,065\,455\,939, \
54\,259\,734\,183\,964\,303\,995,\dots\]
\end{proof}

\subsection{Relation to  values of the Fabius function.}
Notice that $F(2^{-n})=\up(1-2^{-n})$. 
In \cite{MR666839}*{Theorem 6} it is shown that 
\begin{equation}
n\int_0^1x^{n-1}\up(x)\,dx=n!\,2^{\binom{n}{2}}\up(1-2^{-n}).
\end{equation}
Since $\up(x)$ is an even function, it follows that 
\begin{equation}
\frac{c_n}{2}=\int_0^1x^{2n}\up(x)\,dx=(2n)!\,2^{\binom{2n+1}{2}}
\up(1-2^{-2n-1}).
\end{equation}
Therefore, as noticed in \cite{MR666839} we have
\begin{equation}\label{E:35}
F(2^{-2n-1})=\up(1-2^{-2n-1})=\frac{2^{-\binom{2n+1}{2}}}{2 (2n)!}\frac{F_n}{(2n+1)(2n-1)\cdots1}
\prod_{k=1}^n(2^{2k}-1)^{-1}.
\end{equation}
The values of $F(2^{-n})=\up(1-2^{-n})$ depend of another interesting sequence of 
rational numbers.

\subsection{The sequence of rational  numbers $d_n$.}
The sequence of numbers $d_n$ is introduced in  \cite{MR666839}*{eq.(38) and (39)}
as the coefficients of the power series
\begin{equation}\label{E:dexpan}
f(x)=\sum_{n=0}^\infty \frac{d_n}{n!}x^n
\end{equation}
satisfying 
\begin{equation}\label{E:deff}
f(2x)=\frac{e^x-1}{x}f(x).
\end{equation}
So these numbers are defined also by the recurrence \cite{MR666839}*{eq.~(40)}
\begin{equation}\label{E:d-recur}
d_0=1,\quad (n+1)(2^n-1)d_n=\sum_{k=0}^{n-1}\binom{n+1}{k}d_k.
\end{equation}
The $d_n$ are connected with $\up(t)$ in  \cite{MR666839}*{eq.~(36)}.  
It is shown there that the function
\begin{equation}\label{E:gooddef}
f(x):=1+x\int_0^1\up(t)e^{xt}\,dt=e^{\frac{x}{2}}\widehat\up
\Bigl(\frac{ix}{4\pi}\Bigr)
\end{equation}
satisfies \eqref{E:deff}. We give the simple proof that was omitted in 
\cite{MR666839}.

\begin{prop}
Let $f$ be defined as in \eqref{E:gooddef}, then we have \eqref{E:deff}.
\end{prop}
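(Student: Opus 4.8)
The plan is to verify the functional equation \eqref{E:deff} directly from the integral representation \eqref{E:gooddef}, using the basic self-similarity $\up'(x)=2(\up(2x+1)-\up(2x-1))$, or equivalently the Fabius relation $F'(x)=2F(2x)$. Write $g(x)=\int_0^1\up(t)e^{xt}\,dt$, so $f(x)=1+xg(x)$. I want to show $f(2x)x=(e^x-1)f(x)$, i.e.
\[
1+2x\int_0^1\up(t)e^{2xt}\,dt=\frac{e^x-1}{x}\Bigl(1+x\int_0^1\up(t)e^{xt}\,dt\Bigr).
\]
First I would substitute $s=2t$ in the left-hand integral to rewrite $2x\int_0^1\up(t)e^{2xt}\,dt=x\int_0^2\up(s/2)e^{xs}\,ds$, and then split the $s$-range at $1$.

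Next, the key step is to express $\up(s/2)$ on each subinterval in terms of $\up$ on $[0,1]$ via the defining relation. Recall $\up(t)=F(1-t)$ for $0\le t\le1$ and $\up$ is even; differentiating, $\up'(t)=-F'(1-t)=-2F(2(1-t))$. The cleanest route is to use the antiderivative: since $\up'(u)=2\up(2u+1)-2\up(2u-1)$, integrating gives a relation between $\up$ at scale $1$ and at scale $1/2$. Concretely, for $0\le s\le 1$ one has $2u+1\in[1,2]$ and $2u-1\in[-1,0]$ with $u=s/2-\tfrac12$ (or a similar affine change), so $\up(s/2)$ unfolds into a combination of $\up$ evaluated at arguments in $[0,1]$ plus boundary terms. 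I would then integrate each resulting piece against $e^{xs}$ by parts, transferring the derivative onto $e^{xs}$ to produce factors of $x$, and collect terms. The $\up(0)=1$ normalization supplies the stray constant $1$ on each side, and the boundary contributions at $t=0$ and $t=1$ (where $\up(1)=0$) are exactly what generate the factor $e^x-1$.

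An equivalent and perhaps slicker execution: differentiate the claimed identity in $x$ instead of proving it directly. Both sides, viewed as power series, vanish to the appropriate order at $x=0$ (one checks $f(0)=1$ and matches the linear term using $c_0=\int\up=1$), so it suffices to show the derivatives agree. Differentiating $f(x)=e^{x/2}\widehat\up(ix/4\pi)$ and using that $\widehat\up$ satisfies $\widehat\up(2z)=\frac{\sin\pi z}{\pi z}\widehat\up(z)$ — which is immediate from the infinite product \eqref{hatphi} — gives \eqref{E:deff} after the substitution $z=ix/4\pi$, since $\frac{\sin\pi z}{\pi z}$ becomes $\frac{\sinh(x/4)}{x/4}=\frac{e^{x/4}-e^{-x/4}}{x/2}$ and the prefactors $e^{x/2}$ versus $e^{x}$ combine to yield $\frac{e^x-1}{x}$. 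This second approach sidesteps the integration-by-parts bookkeeping entirely.

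The main obstacle in the direct integral approach is keeping the affine changes of variable and the resulting boundary terms straight when unfolding $\up(s/2)$ on $[0,2]$ into pieces living on $[0,1]$; it is easy to drop a factor of $2$ or misplace a sign from the evenness of $\up$. For that reason I expect to present the proof via the product formula for $\widehat\up$: verifying $\widehat\up(2z)=\frac{\sin\pi z}{\pi z}\widehat\up(z)$ is a one-line consequence of \eqref{hatphi} (shift the product index), and the translation back to $f$ through the identity $f(x)=e^{x/2}\widehat\up(ix/4\pi)$ already recorded in \eqref{E:gooddef} is routine algebra with exponentials.
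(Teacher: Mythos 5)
Your second (and preferred) route is exactly the paper's proof: use the dilation identity for $\widehat\up$ coming from \eqref{E:main} (equivalently, from shifting the index in the infinite product \eqref{hatphi}), substitute an imaginary argument, and translate back to $f$ through \eqref{E:gooddef}. However, you have mis-stated the functional equation by a factor of $2$: the product gives $\widehat\up(2z)=\frac{\sin 2\pi z}{2\pi z}\,\widehat\up(z)$, i.e.\ $\widehat\up(z)=\frac{\sin\pi z}{\pi z}\,\widehat\up(z/2)$, not $\widehat\up(2z)=\frac{\sin\pi z}{\pi z}\,\widehat\up(z)$. With your version the relevant factor at $z=ix/4\pi$ is $\frac{\sinh(x/4)}{x/4}$, and then $e^{x}\cdot\frac{e^{x/4}-e^{-x/4}}{x/2}$ does \emph{not} combine with $e^{x/2}$ to give $\frac{e^x-1}{x}$; the algebra as written yields $\frac{2(e^{3x/4}-e^{x/4})}{x}f(x)$. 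With the correct identity one gets
\[
f(2x)=e^{x}\,\widehat\up\Bigl(\tfrac{ix}{2\pi}\Bigr)
=e^{x}\,\frac{\sinh(x/2)}{x/2}\,\widehat\up\Bigl(\tfrac{ix}{4\pi}\Bigr)
=e^{x/2}\,\frac{e^{x}-1}{x}\,\widehat\up\Bigl(\tfrac{ix}{4\pi}\Bigr)
=\frac{e^{x}-1}{x}f(x),
\]
which is the paper's one-line computation. The slip is purely arithmetic and immediately repairable, but as written the key step does not close. There is also no need to ``differentiate the claimed identity in $x$''; the direct substitution above suffices.
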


\begin{proof}
The functional equation \eqref{E:main} of $\up(t)$ implies for the Fourier transform that 
\[\widehat\up(z)=\frac{\sin\pi z}{\pi z}\widehat\up(z/2).\]
Therefore
\[f(2x)=e^{x}\widehat\up
\Bigl(\frac{ix}{2\pi}\Bigr)=e^{x}\frac{\sin(ix/2)}{ix/2}\widehat\up
\Bigl(\frac{ix}{4\pi}\Bigr)=e^{\frac{x}{2}}\frac{e^{-x/2}-e^{x/2}}{2i(ix/2)}f(x)
=\frac{e^x-1}{x}f(x).\]
\end{proof}

By \eqref{E:gooddef} we have
\[f(x)=1+x\sum_{n=0}^\infty\frac{x^n}{n!}\int_0^1 t^n\up(t)\,dt.\]
So, equating the coefficients of equal powers with the expansion \eqref{E:dexpan}
we obtain 
\cite{MR666839}*{eq.~(42)}
\begin{equation}\label{E:dnint}
d_n=n\int_0^1t^{n-1}\up(t)\,dt.
\end{equation}
Together with \cite{MR666839}*{eq.~(33)} this yields
\begin{equation}\label{E:d-n}
d_n=n!\,2^{\binom{n}{2}}\up(1-2^{-n})=n!\,2^{\binom{n}{2}}F(2^{-n}).
\end{equation}
The values of the $d_n$ can be computed easily applying the recurrence \eqref{E:d-recur}.
The first terms are
\[1,\frac{1}{2},\frac{5}{18},\frac{1}{6},\frac{143}{1\,350},\frac{19}{270},
\frac{1\,153}{23\,814},\frac{583}{17\,010},\frac{1\,616\,353}{65\,063\,250},
\frac{132\,809}{7\,229\,250},
\frac{134\,926\,369}{9\,762\,090\,030},\dots\]

\begin{prop}
The values of  $d_n$ can be computed in terms of  $c_n$:
\begin{equation}\label{E:dncn1}
d_n=\frac{1}{2^n}\sum_{k=0}^{\lfloor n/2\rfloor}\binom{n}{2k}c_k.
\end{equation}
\end{prop}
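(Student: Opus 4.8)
The plan is to read the identity straight off the two generating-function representations already established. On one hand \eqref{E:gooddef} gives
\[f(x)=\sum_{n=0}^\infty\frac{d_n}{n!}x^n=e^{x/2}\,\widehat\up\Bigl(\frac{ix}{4\pi}\Bigr),\]
and on the other hand \eqref{hatphi} gives the Taylor expansion $\widehat\up(z)=\sum_{k\ge0}(-1)^k\frac{c_k}{(2k)!}(2\pi z)^{2k}$. The first step is to substitute $z=ix/(4\pi)$, so that $2\pi z=ix/2$ and $(2\pi z)^{2k}=(ix/2)^{2k}=(-1)^k x^{2k}/4^k$; the factor $(-1)^k$ produced here cancels the $(-1)^k$ already present in \eqref{hatphi}, leaving the clean even series $\widehat\up(ix/(4\pi))=\sum_{k\ge0}\frac{c_k}{(2k)!\,4^k}\,x^{2k}$. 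Since $\widehat\up$ is entire and $e^{x/2}$ is entire, both series converge everywhere and their Cauchy product is legitimate.

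Next I would multiply by $e^{x/2}=\sum_{m\ge0}\frac{x^m}{2^m m!}$ and collect the coefficient of $x^n$. Writing $n=m+2k$, the coefficient of $x^n$ in $f(x)$ equals $\sum_{0\le k\le\lfloor n/2\rfloor}\frac{1}{2^{n-2k}(n-2k)!}\cdot\frac{c_k}{(2k)!\,4^k}$. Using $2^{n-2k}\cdot 4^k=2^n$, this collapses to $\frac{1}{2^n}\sum_{k}\frac{c_k}{(n-2k)!\,(2k)!}$. Equating with $d_n/n!$ and multiplying through by $n!$ converts $\frac{n!}{(n-2k)!\,(2k)!}$ into $\binom{n}{2k}$, which is exactly \eqref{E:dncn1}.

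The argument is purely formal power-series bookkeeping, so there is no genuine obstacle; the only spot where one can slip is tracking the powers of $2$, so I would make the cancellation $2^{n-2k}4^k=2^n$ explicit, and likewise flag that the two sign factors $(-1)^k$ annihilate. One could instead derive \eqref{E:dncn1} by induction from the recurrences \eqref{E:cr2} and \eqref{E:d-recur}, but that route is considerably messier, so the generating-function computation is the natural proof.
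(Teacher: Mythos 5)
Your computation is correct: with $z=ix/(4\pi)$ one indeed has $(2\pi z)^{2k}=(-1)^kx^{2k}/4^k$, the two sign factors cancel, the Cauchy product with $e^{x/2}$ gives the coefficient $\frac{1}{2^n}\sum_k\frac{c_k}{(n-2k)!\,(2k)!}$ of $x^n$, and multiplying by $n!$ yields \eqref{E:dncn1}. This is, however, a genuinely different route from the paper's. The paper works on the integral side rather than the generating-function side: starting from $d_n=n\int_0^1t^{n-1}\up(t)\,dt$ (equation \eqref{E:dnint}), it integrates by parts, invokes the functional equation \eqref{E:main} to replace $\up'(t)$ and reduce to $2\int_0^1t^n\up(2t-1)\,dt$, substitutes $x=2t-1$, expands $\bigl(\frac{x+1}{2}\bigr)^n$ binomially, and uses the evenness of $\up$ to kill the odd moments, leaving exactly the even moments $c_h=\int x^{2h}\up(x)\,dx$. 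Your version trades that integration-by-parts chain for formal power-series bookkeeping and is arguably slicker, but it leans on the right-hand equality in \eqref{E:gooddef}, namely $f(x)=e^{x/2}\widehat\up(ix/(4\pi))$, which the paper quotes from the reference rather than proving here (it only verifies that this $f$ satisfies \eqref{E:deff}); you should also say a word about why the $f$ of \eqref{E:gooddef} has Taylor coefficients $d_n/n!$ in the first place, i.e.\ that the functional equation \eqref{E:deff} together with $f(0)=1$ determines the coefficients uniquely via \eqref{E:d-recur}. The paper's argument, by contrast, needs only \eqref{E:dnint}, the functional equation, and the moment formula \eqref{D:cn}. Both proofs are short and valid; yours generalizes more readily to other coefficient identities between $(c_n)$ and $(d_n)$, while the paper's stays closer to the integral definitions.
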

\begin{proof}
For $n=0$ we have $c_0=d_0=1$ and \eqref{E:dncn1} is trivial in this case. 
For $n\ge1$ we have the following computation
\begin{multline*}
d_n=n\int_0^1 t^{n-1}\up(t)\,dt=\int_0^1\up(t)\,d(t^n)=\Bigl.
t^n\up(t)\Bigr|_{t=0}^{t=1}-\int_0^1t^n\up'(t)\,dt\\
=
-\int_0^12t^n\bigl(\up(2t+1)-\up(2t-1)\bigr)\,dt=2\int_0^1 t^n\up(2t-1)\,dt
=\int_{-1}^1\up(x)\Bigl(\frac{x+1}{2}\Bigr)^n\,dx\\
=
\frac{1}{2^n}\sum_{k=0}^n\int_{-1}^1\binom{n}{k}\up(x)x^k\,dx=
\frac{1}{2^n}\sum_{0\le h\le n/2}\binom{n}{2h}\int\up(x)x^{2h}\,dx\\=
\frac{1}{2^n}\sum_{0\le h\le n/2}\binom{n}{2h}c_h.
\end{multline*}
\end{proof}

As in the case of the $c_n$, there is a sequence of natural numbers hiding in 
$d_n$. 

\begin{prop}
There is a sequence of natural numbers $(G_n)$ such that 
\begin{equation}\label{E:defG}
d_n=\frac{G_n}{(n+1)!}\prod_{k=1}^n(2^k-1)^{-1}.
\end{equation}
\end{prop}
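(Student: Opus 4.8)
The plan is to imitate verbatim the proof of Proposition \ref{PD:F}. Define
\[
G_n := d_n\,(n+1)!\,\prod_{k=1}^n(2^k-1),
\]
so that \eqref{E:defG} holds by construction, with the empty product convention giving $G_0=d_0\cdot 1\cdot 1=1$. The content of the proposition is then entirely the claim that each $G_n$ is a natural number, and this I would extract from the recurrence \eqref{E:d-recur}.

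First I would substitute the Ansatz $d_n=\frac{G_n}{(n+1)!}\prod_{k=1}^n(2^k-1)^{-1}$ into both sides of \eqref{E:d-recur}. On the left the factor $(n+1)(2^n-1)$ cancels the $(n+1)$ in $(n+1)!$ and the top factor $(2^n-1)^{-1}$ of the product, leaving $\frac{G_n}{n!}\prod_{k=1}^{n-1}(2^k-1)^{-1}$; the right becomes $\sum_{k=0}^{n-1}\binom{n+1}{k}\frac{G_k}{(k+1)!}\prod_{\nu=1}^{k}(2^\nu-1)^{-1}$. Multiplying the resulting identity through by $n!\prod_{\nu=1}^{n-1}(2^\nu-1)$ yields, for $n\ge1$,
\[
G_n=\sum_{k=0}^{n-1}\binom{n+1}{k}\,\frac{n!}{(k+1)!}\,G_k\prod_{\nu=k+1}^{n-1}(2^\nu-1).
\]

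Then I would check that every coefficient appearing on the right is a non‑negative integer, so that the formula proves $G_n\in\N$ by induction on $n$ starting from $G_0=1$: the binomial $\binom{n+1}{k}$ is an integer, the product $\prod_{\nu=k+1}^{n-1}(2^\nu-1)$ is a (positive) integer, and the quotient $\frac{n!}{(k+1)!}=(k+2)(k+3)\cdots n$ is an integer precisely because the summation index satisfies $k\le n-1$, i.e. $k+1\le n$. Positivity of the $G_n$ follows since \eqref{E:d-recur} forces $d_n>0$ and the correcting factor $(n+1)!\prod_{k=1}^n(2^k-1)$ is positive. The only mildly delicate point is the integrality of $n!/(k+1)!$, and that is settled by the range of summation; the rest is a routine transcription of the argument already carried out for the sequence $F_n$, so one may also record the first few values $G_n=1,\,1,\,5,\,14,\dots$ as a sanity check against the listed $d_n$.
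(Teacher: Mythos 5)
Your proof is correct and takes essentially the same route as the paper: define $G_n$ by the displayed formula, substitute into the recurrence \eqref{E:d-recur} to obtain $G_n=\sum_{k=0}^{n-1}G_k\binom{n+1}{k}\frac{n!}{(k+1)!}\prod_{j=k+1}^{n-1}(2^j-1)$, and conclude by induction since every coefficient is a non-negative integer (with $n!/(k+1)!$ integral because $k+1\le n$). The only slip is in your sanity-check aside: $G_3=d_3\cdot 4!\cdot(2-1)(2^2-1)(2^3-1)=\tfrac16\cdot 24\cdot 21=84$, not $14$ (the paper's list is $1,\,1,\,5,\,84,\,4004,\dots$).
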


\begin{proof}
For $n\ge0$ define $G_n$ by the equation \eqref{E:defG}. The recurrence \eqref{E:d-recur}
then implies 
\[G_n=\sum_{k=0}^{n-1}G_k\binom{n+1}{k}\frac{n!}{(k+1)!}\prod_{j={k+1}}^{n-1}(2^j-1),\qquad n\ge1.\]
By induction we obtain that all the $G_n$ are natural numbers. 
\end{proof}
The first numbers $G_n$ starting from $n=0$ are 
\[1,\  1,\  5,\  84,\  4\,004,\   494\,760,\   150\,120\,600,\   107\,969\,547\,840,\   179\,605\,731\,622\,464, \dots\]
By \eqref{E:d-n} we obtain the explicit value
\begin{equation}\label{E:values}
F(2^{-n})=\up(1-2^{-n})=\frac{2^{-\binom{n}{2}}G_n}{n!\,(n+1)!}\prod_{k=1}^{n}(2^k-1)^{-1},\qquad n\ge0.
\end{equation} 
Equations \eqref{E:35} and \eqref{E:values} yields
\begin{equation}
\frac{G_{2n+1}}{2n+1}=2^n (n+1)! F_n\prod_{k=0}^n(2^{2k+1}-1),\qquad n\ge0.
\end{equation}
So that $(2n+1)F_n\mid G_{2n+1}$.

\section{Reshetnikov question.}

Recently Vladimir Reshetnikov posed a  question (\href{http://mathoverflow.net/questions/261649/a-conjecture-about-certain-values-of-the-fabius-function}{question 261649
in MathOverflow}) about Fabius function. The results in \cite{MR666839} makes it easy 
to answer. Note that \cite{MR666839} has been translated to English
and posted it in arXiv (\href{http://arxiv.org/abs/1702.05442}{English version arXiv:1702.05442}).
We refer
to this paper for some of the results.

The question of Reshetnikov  is
the following:

\begin{quest}[V. Reshetnikov]
For any natural number $n$ define the numbers
\begin{equation}\label{D:R}
R_n=2^{\binom{n-1}{2}}(2n)!\,F(2^{-n})\prod_{m=1}^{\lfloor n/2\rfloor}(2^{2m}-1).
\end{equation}
It is true that all $R_n$ are natural numbers?
\end{quest}
The first  $R_n$ (starting from $n=1$) certainly are:
\[1,\nn 5,\nn 15,\nn 1\,001,\nn 5\,985,\nn 2\,853\,675,\nn 26\,261\,235,\nn 72\,808\,620\,885,\nn 915\,304\,354\,965,\dots\]
Applying some of the results in \cite{MR666839} we will show that in fact all $R_n$ are
natural numbers. 

In Section \ref{S:comput} we consider the problem of the computation of 
the exact values of the function $F(x)$
at dyadic points. This will allows us to state another conjecture related to this 
conjecture of Reshetnikov.

\section{Answer to Reshetnikov's question.}

Reshetnikov's numbers can be computed in terms of the $d_n$. This  will be 
essential in the solution of Reshetnikov's question.

\begin{prop}\label{P:Rndn}
For all $n\ge1$ we have
\begin{equation}\label{E:Rndn}
R_n=2d_n (2n-1)!! \prod_{k=1}^{\lfloor n/2\rfloor}(2^{2k}-1).
\end{equation}
\end{prop}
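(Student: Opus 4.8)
The plan is to derive the identity by a direct substitution, eliminating $F(2^{-n})$ in favor of $d_n$ and then simplifying the resulting powers of $2$. The starting point is equation \eqref{E:d-n}, which gives $d_n=n!\,2^{\binom{n}{2}}F(2^{-n})$, hence $F(2^{-n})=d_n/\bigl(n!\,2^{\binom{n}{2}}\bigr)$. Inserting this into the definition \eqref{D:R} of $R_n$ yields
\[
R_n=2^{\binom{n-1}{2}}(2n)!\cdot\frac{d_n}{n!\,2^{\binom{n}{2}}}\prod_{m=1}^{\lfloor n/2\rfloor}(2^{2m}-1)
=2^{\binom{n-1}{2}-\binom{n}{2}}\,\frac{(2n)!}{n!}\,d_n\prod_{m=1}^{\lfloor n/2\rfloor}(2^{2m}-1).
\]

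The next step is the elementary binomial identity $\binom{n-1}{2}-\binom{n}{2}=\tfrac{(n-1)(n-2)}{2}-\tfrac{n(n-1)}{2}=-(n-1)=1-n$, so the power of $2$ out front is $2^{1-n}$. Then I would use the standard factorisation $(2n)!=(2n)!!\,(2n-1)!!=2^n\,n!\,(2n-1)!!$, which gives $(2n)!/n!=2^n(2n-1)!!$. Substituting both simplifications,
\[
R_n=2^{1-n}\cdot 2^n(2n-1)!!\,d_n\prod_{m=1}^{\lfloor n/2\rfloor}(2^{2m}-1)=2\,d_n\,(2n-1)!!\prod_{k=1}^{\lfloor n/2\rfloor}(2^{2k}-1),
\]
which is exactly \eqref{E:Rndn}.

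There is no real obstacle here: the proof is a one-line algebraic manipulation once \eqref{E:d-n} is invoked, and the only point requiring a moment's care is bookkeeping the exponents of $2$, namely checking that $\binom{n-1}{2}-\binom{n}{2}+n=1$ so that the two factors $2^{1-n}$ and $2^{n}$ collapse to the single factor $2$. The case $n=1$ (where the empty product is $1$ and $(2n-1)!!=1$) is covered by the same computation, so no separate base case is needed.
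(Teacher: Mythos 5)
Your proof is correct and is exactly the paper's argument: substitute $F(2^{-n})=d_n/(n!\,2^{\binom{n}{2}})$ from \eqref{E:d-n} into the definition \eqref{D:R} and simplify. The paper leaves the exponent bookkeeping implicit; your explicit check that $\binom{n-1}{2}-\binom{n}{2}+n=1$ and that $(2n)!/n!=2^n(2n-1)!!$ is the same computation carried out in full.
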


\begin{proof}
In the definition \eqref{D:R} of the $R_n$ substitute the value of 
$F(2^{-n})$ obtained from \eqref{E:d-n}.
\end{proof}

First we consider the odd case. 
\begin{thm}\label{T:6}
For all $n\ge0$ the number $R_{2n+1}$ is an integer multiple of $F_n$.
\end{thm}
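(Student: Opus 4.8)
The plan is to make both sides of the claimed divisibility completely explicit and watch the redundant factors cancel. The key point is that equation \eqref{E:35} already exhibits $F(2^{-2n-1})$ as $F_n$ times an explicit rational number, so substituting it into the definition \eqref{D:R} of $R_{2n+1}$ should leave $F_n$ times a quantity that one only has to recognise as an integer. Concretely, I would set $m=2n+1$ in \eqref{D:R}, note that $\lfloor(2n+1)/2\rfloor=n$, so that
\[
R_{2n+1}=2^{\binom{2n}{2}}(4n+2)!\,F(2^{-2n-1})\prod_{k=1}^{n}(2^{2k}-1),
\]
and then replace $F(2^{-2n-1})$ by its value from \eqref{E:35}.

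After this substitution the two copies of $\prod_{k=1}^{n}(2^{2k}-1)$ cancel against the $\prod_{k=1}^{n}(2^{2k}-1)^{-1}$ coming from \eqref{E:35}, and the powers of $2$ combine to $2^{\binom{2n}{2}-\binom{2n+1}{2}-1}=2^{-2n-1}$, since $\binom{2n}{2}-\binom{2n+1}{2}=-2n$. What is left is
\[
R_{2n+1}=F_n\cdot\frac{(4n+2)!}{2^{2n+1}\,(2n)!\,(2n+1)!!}.
\]
It then remains to check that the rational factor multiplying $F_n$ is a positive integer; this is the only computational step, and it is routine. Using $(4n+2)!=\bigl(2(2n+1)\bigr)!=2^{2n+1}(2n+1)!\,(4n+1)!!$ together with $(2n+1)!=(2n+1)(2n)!$, that factor becomes $(2n+1)(4n+1)!!/(2n+1)!!=(4n+1)!!/(2n-1)!!$, i.e. the product of the $n+1$ consecutive odd integers $2n+1,2n+3,\dots,4n+1$. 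Hence $R_{2n+1}=F_n\,(4n+1)!!/(2n-1)!!$ is an integer multiple of $F_n$.

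There is no serious obstacle here: the whole content of the theorem is exactly this cancellation of the $2$-powers and of the products $\prod_k(2^{2k}-1)$. If one prefers to route through the earlier normalisations, the same identity follows by combining Proposition \ref{P:Rndn} with \eqref{E:d-n} and \eqref{E:35}, which give $d_{2n+1}=\dfrac{(2n+1)F_n}{2\,(2n+1)!!}\prod_{k=1}^{n}(2^{2k}-1)^{-1}$; inserting this into \eqref{E:Rndn} again produces $R_{2n+1}=F_n\,(4n+1)!!/(2n-1)!!$. In particular this computation shows, more precisely, that the quotient $R_{2n+1}/F_n$ is the odd number $(2n+1)(2n+3)\cdots(4n+1)$.
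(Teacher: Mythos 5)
Your computation is correct and is essentially the paper's own argument: both routes rest on the evenness of $\up$ (packaged either as $d_{2n+1}=\tfrac{2n+1}{2}c_n$ or, equivalently, as \eqref{E:35}) together with the definition \eqref{E:cf} of $F_n$, and both land on the identity $R_{2n+1}=F_n\,(4n+1)!!/(2n-1)!!$ of \eqref{E:RF}. The only difference is which previously established equations you cite along the way, and your closing observation that the quotient is the odd number $(2n+1)(2n+3)\cdots(4n+1)$ is a nice (and correct) refinement.
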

\begin{proof}
Since $\up$ is an even function, we have by  \eqref{E:dnint}
\[d_{2n+1}=(2n+1)\int_0^1t^{2n}\up(t)\,dt=\frac{2n+1}{2}c_n.\]
Therefore by \eqref{E:Rndn}
\[R_{2n+1}=2d_{2n+1} (4n+1)!! \prod_{k=1}^{n}(2^{2k}-1)=(2n+1)c_n(4n+1)!! \prod_{k=1}^{n}(2^{2k}-1).\]
By \eqref{E:cf}
\[R_{2n+1}=\frac{F_n}{(2n-1)!!}\prod_{\nu=1}^n(2^{2\nu}-1)^{-1}
(4n+1)!! \prod_{k=1}^{n}(2^{2k}-1).\]
Simplifying 
\begin{equation}\label{E:RF}
R_{2n+1}=F_n\frac{(4n+1)!!}{(2n-1)!!}.
\end{equation}
So that $R_{2n+1}$ is a natural number and  $F_n\mid R_{2n+1}$.
\end{proof}

\begin{prop}\label{P:7}
For $n\ge1$ we have
\begin{equation}\label{E:R2n}
R_{2n}=\sum_{k=0}^n \frac{2 F_k}{2^{2n}}\binom{2n}{2k}\frac{(4n-1)!!}{(2k+1)!!}
\prod_{\ell=k+1}^n(2^{2\ell}-1),
\end{equation}
so that the denominator of $R_{2n}$ can be only a power of $2$.
\end{prop}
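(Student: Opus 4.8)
The plan is to start from the closed formula \eqref{E:Rndn} for $R_{2n}$, namely
\[
R_{2n}=2d_{2n}\,(4n-1)!!\prod_{k=1}^{n}(2^{2k}-1),
\]
and to expand $d_{2n}$ using the identity \eqref{E:dncn1}, which gives $d_{2n}=2^{-2n}\sum_{k=0}^{n}\binom{2n}{2k}c_k$. Substituting this in yields
\[
R_{2n}=\frac{2}{2^{2n}}\sum_{k=0}^{n}\binom{2n}{2k}c_k\,(4n-1)!!\prod_{\ell=1}^{n}(2^{2\ell}-1).
\]
Now I would replace each $c_k$ by its expression \eqref{E:cf} in terms of the natural number $F_k$, i.e. $c_k=F_k\big/\bigl((2k+1)!!\prod_{\nu=1}^{k}(2^{2\nu}-1)\bigr)$. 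The product $\prod_{\ell=1}^{n}(2^{2\ell}-1)$ then splits as $\prod_{\ell=1}^{k}(2^{2\ell}-1)\cdot\prod_{\ell=k+1}^{n}(2^{2\ell}-1)$, and the first factor cancels against the $\prod_{\nu=1}^{k}(2^{2\nu}-1)^{-1}$ coming from $c_k$. Similarly, writing $(2n+1)!!=(2k+1)!!$ is not quite what appears, but the factorial $(4n-1)!!$ survives untouched and the $(2k+1)!!$ in the denominator of $c_k$ is exactly the term displayed in \eqref{E:R2n}. Collecting everything gives precisely
\[
R_{2n}=\sum_{k=0}^{n}\frac{2F_k}{2^{2n}}\binom{2n}{2k}\frac{(4n-1)!!}{(2k+1)!!}\prod_{\ell=k+1}^{n}(2^{2\ell}-1),
\]
which is \eqref{E:R2n}.

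For the second assertion — that the denominator of $R_{2n}$ is at most a power of $2$ — I would argue that every summand on the right is an integer divided by a power of $2$. Indeed $F_k$ is a natural number by Proposition \ref{PD:F}; the binomial coefficient $\binom{2n}{2k}$ is an integer; the product $\prod_{\ell=k+1}^{n}(2^{2\ell}-1)$ is a product of integers; and the ratio of double factorials $\frac{(4n-1)!!}{(2k+1)!!}=(2k+3)(2k+5)\cdots(4n-1)$ is a product of consecutive odd integers, hence an integer. Therefore each term is (an integer) $\cdot\,2/2^{2n}=$ (integer)$/2^{2n-1}$, so $R_{2n}$ itself, being a sum of such terms, has denominator dividing $2^{2n-1}$; in particular its denominator is a power of $2$.

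I do not anticipate a genuine obstacle here: the statement is essentially a bookkeeping consequence of formulas already established, and the only mild care needed is in tracking the cancellation of the two products $\prod(2^{2\ell}-1)$ and making sure the double-factorial ratio is correctly identified as $(4n-1)!!/(2k+1)!!$ rather than something shifted by one index. The one point worth a sentence in the write-up is why $\frac{(4n-1)!!}{(2k+1)!!}$ is an integer for $0\le k\le n$: this holds because $2k+1\le 2n+1\le 4n-1$ for $n\ge1$, so the quotient telescopes to the product of the odd numbers strictly between $2k+1$ and $4n+1$.
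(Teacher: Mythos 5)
Your proof is correct and is essentially identical to the paper's: substitute \eqref{E:dncn1} into \eqref{E:Rndn}, replace $c_k$ via \eqref{E:cf}, cancel the partial product $\prod_{\ell=1}^{k}(2^{2\ell}-1)$, and note that $\frac{(4n-1)!!}{(2k+1)!!}$ is an integer. The only difference is that you spell out the integrality of the double-factorial ratio, which the paper asserts without comment.
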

\begin{proof}
By \eqref{E:dncn1} and \eqref{E:Rndn} we have
\[R_{2n}=
2d_{2n} (4n-1)!! \prod_{\ell=1}^n(2^{2\ell}-1)=
\frac{2}{2^{2n}}\sum_{k=0}^{n}\binom{2n}{2k}c_k (4n-1)!! \prod_{\ell=1}^n(2^{2\ell}-1),\]
so that by \eqref{E:cf}
\[R_{2n}=\sum_{k=0}^n \frac{2}{2^{2n}}\binom{2n}{2k}
(4n-1)!! \frac{F_k}{(2k+1)!!}\prod_{r=1}^k(2^{2r}-1)^{-1}\prod_{\ell=1}^n(2^{2\ell}-1).\]
Simplifying we obtain \eqref{E:R2n}. Since the factor $\frac{(4n-1)!!}{(2k+1)!!}$ is an integer
in \eqref{E:R2n} the  assertion about the denominator follows.
\end{proof}

\begin{thm}\label{T:8}
For all $n$ the number $R_n$ is a natural number.
\end{thm}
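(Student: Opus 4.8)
The plan is to treat the odd and even cases separately, since Theorem \ref{T:6} already settles the odd indices: equation \eqref{E:RF} exhibits $R_{2n+1}=F_n\tfrac{(4n+1)!!}{(2n-1)!!}$ as a product of the natural number $F_n$ (Proposition \ref{PD:F}) with an integer ratio of double factorials, so $R_{2n+1}\in\N$. Thus the only thing left to prove is that each $R_{2n}$ is a natural number, and by Proposition \ref{P:7} we already know $R_{2n}\in\Z[1/2]$, i.e.\ its denominator is a power of $2$. So the entire content of the theorem reduces to the $2$-adic estimate $\nu_2(R_{2n})\ge 0$ for all $n\ge1$.

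To get this, I would work from formula \eqref{E:R2n},
\[
R_{2n}=\sum_{k=0}^n \frac{2 F_k}{2^{2n}}\binom{2n}{2k}\frac{(4n-1)!!}{(2k+1)!!}
\prod_{\ell=k+1}^n(2^{2\ell}-1),
\]
and estimate $\nu_2$ of each summand. Each factor $(2^{2\ell}-1)$ is odd, $(4n-1)!!/(2k+1)!!$ is odd, and $\nu_2(2F_k)=1+\nu_2(F_k)\ge1$, so term $k$ contributes at least $1+\nu_2(F_k)+\nu_2\binom{2n}{2k}-2n$. This is not obviously $\ge0$ termwise, so the natural approach is either (i) to find a lower bound $\nu_2(F_k)\ge 2n-1-\nu_2\binom{2n}{2k}$ that makes each term integral — plausibly via a closed form or recurrence for $\nu_2(F_k)$ analogous to the stated result $\nu_2(F(2^{-n}))=-\binom n2-\nu_2(n!)-1$ — or (ii) to exploit the alternative representation $R_{2n}=2d_{2n}(4n-1)!!\prod_{\ell=1}^n(2^{2\ell}-1)$ together with the natural-number sequence $G_n$ from \eqref{E:defG}. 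Substituting $d_{2n}=G_{2n}/((2n+1)!\prod_{k=1}^{2n}(2^k-1))$ gives
\[
R_{2n}=\frac{2\,G_{2n}\,(4n-1)!!}{(2n+1)!}\cdot\frac{\prod_{\ell=1}^n(2^{2\ell}-1)}{\prod_{k=1}^{2n}(2^k-1)}
=\frac{2\,G_{2n}\,(4n-1)!!}{(2n+1)!}\cdot\frac{1}{\prod_{\ell=1}^n(2^{2\ell-1}-1)},
\]
since $\prod_{k=1}^{2n}(2^k-1)=\prod_{\ell=1}^n(2^{2\ell}-1)\prod_{\ell=1}^n(2^{2\ell-1}-1)$. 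The factor $\prod(2^{2\ell-1}-1)$ is odd, so it does not affect $\nu_2$, and one reduces to showing $\nu_2\bigl(2\,G_{2n}(4n-1)!!/(2n+1)!\bigr)\ge0$, i.e.\ $\nu_2(G_{2n})\ge \nu_2((2n+1)!)-1=\nu_2((2n)!)-1$ (using $(4n-1)!!$ odd).

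So the crux is the single $2$-adic inequality $\nu_2(G_{2n})\ge \nu_2((2n)!)-1$. I would prove this by induction from the recurrence $G_m=\sum_{k=0}^{m-1}G_k\binom{m+1}{k}\frac{m!}{(k+1)!}\prod_{j=k+1}^{m-1}(2^j-1)$: the product of Mersenne-type factors is odd, so $\nu_2$ of term $k$ is $\nu_2(G_k)+\nu_2\binom{m+1}{k}+\nu_2(m!)-\nu_2((k+1)!)$, and I would combine the inductive hypothesis on $G_k$ with Legendre's formula / Kummer's theorem on $\nu_2\binom{m+1}{k}$, being careful that the bound for $G$ at even and odd indices may differ (the displayed list of $G_n$'s suggests the odd-index ones are divisible by higher powers of $2$ than the even-index ones, which is exactly what makes $R_{2n}$ only barely integral). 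The main obstacle I anticipate is this bookkeeping: getting a clean, provable lower bound for $\nu_2(G_m)$ uniformly in $m$ (splitting by parity of $m$) so that the induction closes, rather than the bound drifting by $1$ at each step. If a direct induction on $G_m$ proves awkward, the fallback is to route through the $d_n$ recurrence \eqref{E:d-recur} or through the connection $G_{2n+1}/(2n+1)=2^n(n+1)!F_n\prod_{k=0}^n(2^{2k+1}-1)$ to transfer $2$-adic information between the two sequences; but I expect the cleanest path is the direct $\nu_2(G_{2n})\ge\nu_2((2n)!)-1$ estimate above, after which the theorem follows immediately by combining it with the odd case \eqref{E:RF}.
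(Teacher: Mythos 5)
Your skeleton matches the paper's: Theorem \ref{T:6} disposes of odd indices, Proposition \ref{P:7} shows the denominator of $R_{2n}$ is a power of $2$, and everything reduces to the single estimate $\nu_2(R_{2n})\ge0$. But that estimate --- the entire content of the theorem --- is not actually proved in your proposal. You reduce it to $\nu_2(G_{2n})\ge\nu_2((2n)!)-1$ and then say you \emph{would} prove this by induction on the recurrence for $G_m$, while explicitly flagging the parity bookkeeping as an unresolved obstacle. The obstacle is real: the recurrence for $G_{2n}$ involves the odd-indexed $G_{2m+1}$, and for those the needed bound $\nu_2(G_{2m+1})\ge\nu_2((2m+2)!)-1$ does not come out of the same induction; it has to be imported from the identity $G_{2m+1}/(2m+1)=2^m(m+1)!\,F_m\prod_{k=0}^m(2^{2k+1}-1)$ together with $F_m\in\N$ (and the computation $\nu_2((2m+2)!)=m+1+\nu_2((m+1)!)$, so that $\nu_2(G_{2m+1})\ge m+\nu_2((m+1)!)=\nu_2((2m+2)!)-1$). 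With that ingredient supplied, your termwise estimate does close; as written, the proof has a hole exactly at its crux.

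The paper takes the route you relegate to a fallback, and it is considerably lighter: no $G_n$, no Legendre/Kummer. From \eqref{E:Rndn}, $\nu_2(R_n)=\nu_2(2d_n)$ because $(2n-1)!!\prod(2^{2k}-1)$ is odd, so one proves $\nu_2(2d_n)\ge0$ by strong induction. Odd indices are immediate from \eqref{E:RF}: $R_{2n+1}=F_n(4n+1)!!/(2n-1)!!\in\N$ gives $\nu_2(2d_{2n+1})=\nu_2(R_{2n+1})\ge0$. For even indices, doubling the recurrence \eqref{E:d-recur} yields
\[(2n+1)(2^{2n}-1)\,2d_{2n}=\sum_{k=0}^{2n-1}\binom{2n+1}{k}\,2d_k,\]
where the left-hand multiplier is odd and the right-hand side is an integer combination of quantities with $\nu_2\ge0$ by the induction hypothesis; the ultrametric inequality finishes it in one line. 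The moral difference is that inducting on $\nu_2(2d_n)$ lets the odd multiplier $(2n+1)(2^{2n}-1)$ absorb all the bookkeeping that, in your version, has to be tracked explicitly through $\nu_2(G_m)$ and binomial valuations.
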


\begin{proof}
By Theorem \ref{T:6} it remains only to prove that $R_{2n}$ is a natural number.
For a rational number $r$ let $\nu_2(r)$ be the exponent of $2$ 
in the prime decomposition
of $r$. By Proposition \ref{P:7},  $R_{2n}$ is a 
natural number if and only if $\nu_2(R_{2n})\ge0$. By \eqref{E:Rndn} we have
$\nu_2(R_{n})=\nu_2(2d_{n})$.  We will show by induction that 
$\nu_2(2d_n)\ge0$ and this will prove the Theorem.

Since the numbers $F_n$ are natural, \eqref{E:RF} implies that 
$\nu_2(2d_{2n+1})=\nu_2(R_{2n+1})\ge0$. 
We have $\nu_2(2d_0)=\nu_2(2)=1\ge0$. For any other even number we use induction. By
\eqref{E:d-recur} we have for $n\ge1$
\begin{equation}\label{E:d2n=}
(2n+1)(2^{2n}-1)2d_{2n}=\sum_{k=0}^{2n-1}\binom{2n+1}{k}2d_k.
\end{equation}
Hence 
\[\nu_2(2d_{2n})=\nu_2\bigl((2n+1)(2^{2n}-1)2d_{2n}\bigr)=
\nu_2\Bigl(\sum_{k=0}^{2n-1}\binom{2n+1}{k}2d_k\Bigr),\qquad n\ge1.\]
By induction hypothesis $\nu_2(2d_k)\ge0$ for $0\le k\le 2n-1$. It follows that $\nu_2(2d_{2n})\ge0$
for all $n\ge1$. 
\end{proof}

\section{Computation of $\up(m/2^n)$.}\label{S:comput}

At dyadic points $t=m/2^n$  the function takes rational values. This was proved 
in \cite{MR666839}*{Th.~9}, where a procedure to compute these values was given.
In \cite{MR666839} it was desired to express $\up(q/2^n)$ in terms of $\up(1-2^{-k-1})$. 
This was achieved by substituting $c_k$ in terms of $\up(1-2^{-k-1})$. This is 
correct, but for computations it is preferable not to make this last substitution. 
In this way the formula in \cite{MR666839}*{Thm.~7} can be written
\[\up(q2^{-n})=\frac{2^{-\binom{n+1}{2}}}{n!}\sum_{h=0}^{q+2^n-1}(-1)^{w(h)}
\sum_{k=0}^{\lfloor n/2\rfloor}\binom{n}{2k}\bigl(2(q-h)+2^{n+1}-1\bigr)^{n-2k}c_k.\]
This is valid for $-2^{-n}\le q\le 2^n$. 
The sum in $h$ has few terms for $q$ near $-2^{n}$.
Therefore since $\up(1-x)=\up(-1+x)$ we obtain, with $q=a-2^{n}$,
\begin{equation}\label{E:explicit}
F(\tfrac{a}{2^n})=\up(1-\tfrac{a}{2^n})=
\frac{2^{-\binom{n+1}{2}}}{n!}\sum_{h=0}^{a-1}(-1)^{w(h)}
\sum_{k=0}^{\lfloor n/2\rfloor}\binom{n}{2k}(2a-2h-1)^{n-2k}c_k.
\end{equation}
The numbers $c_k$ can be computed with the recurrence \eqref{E:cr2}. 
This  equation, together with $\up(t)+\up(1-t)=1$ for $0\le t\le1$,  allows one to easily 
compute the values at dyadic points. 
Another way has been proposed by Haugland recently \cite{H}.
Notice also that our formula can be written as follows
\begin{equation}\label{E:explicit2}
\up(1-\tfrac{a}{2^n})=
\frac{2^{-\binom{n+1}{2}}}{n!}
\sum_{k=0}^{\lfloor n/2\rfloor}\binom{n}{2k}c_k
\sum_{h=0}^{a-1}(-1)^{w(h)}
(2a-2h-1)^{n-2k}.
\end{equation}

Equation \eqref{E:explicit2} show that the values at dyadic points of $\up(x)$, and therefore also those  
of $F(x)$, are rational numbers. In principle it allow the explicit computation of $\up(a/2^n)$ 
and therefore of $F(a/2^n)$ for all integers $a$. Nevertheless there is a better way to compute the function.
It is thought that  this better method is due to V. A. Rvach{\"e}v,  
brought to attention by V. Reshetnikov. He posted a solution to a 
\href{http://mathematica.stackexchange.com/questions/120331/how-do-i-numerically-evaluate-and-plot-the-fabius-function}{question of Pierrot Bolnez} about how to compute $F(x)$. 
Reshetnikov's answer consists of a program 
in Mathematica to compute the function. The program has been reverse engineered, 
and proved the 
relevant result.  At other point Reshetnikov cites Rvach{\"e}v and  says that he thinks it is not 
translated to English.  So a proof of the formula on which the method is based can be useful.

We need a Lemma.
\begin{lem}\label{L:taylor}
Let $x>0$ be a real number and let  $k$ be the unique integer such that $2^k\le x<2^{k+1}$. Then for any 
non negative integer $n$ we have
\begin{equation}
\int_{2^k}^x(x-t)^nF^{(n+1)}(t)\,dt=-\int_0^{y}(y-t)^n F^{(n+1)}(t)\,dt,\qquad \text{with}\quad y=x-2^k.
\end{equation}
\end{lem}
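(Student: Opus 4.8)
The plan is to exploit the scaling identity \eqref{E:FabiusD}, namely $F^{(n+1)}(t) = 2^{\binom{n+2}{2}} F(2^{n+1} t)$, to reduce both sides to integrals of $F$ itself, and then to use the functional equation $F'(x) = 2F(2x)$ in the form of the \emph{integral} recursion it induces. Writing $y = x - 2^k$, the right-hand side after substituting \eqref{E:FabiusD} becomes a constant times $\int_0^y (y-t)^n F(2^{n+1} t)\,dt$, while the left-hand side becomes the same constant times $\int_{2^k}^x (x-t)^n F(2^{n+1} t)\,dt$; a change of variable $t \mapsto t + 2^k$ in the left-hand integral turns it into $\int_0^y (y-t)^n F(2^{n+1} t + 2^{n+1+k})\,dt$. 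So the claim is equivalent to
\begin{equation}\label{E:plan-reduced}
\int_0^y (y-t)^n \bigl(F(2^{n+1}t + 2^{n+1+k}) + F(2^{n+1}t)\bigr)\,dt = 0, \qquad 0 \le y < 2^k.
\end{equation}

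The heart of the matter is therefore a symmetry of $F$ on the relevant range. On $0 \le t < 2^k$ we have $0 \le 2^{n+1} t < 2^{n+1+k}$, so the argument $s := 2^{n+1} t$ runs over $[0, 2^{n+1+k})$ and $s + 2^{n+1+k}$ runs over $[2^{n+1+k}, 2^{n+2+k})$. I would prove that $F(s) + F(s + 2^{N}) = 0$ for $0 \le s < 2^{N}$ fails as stated — instead, the correct relation to chase is periodicity-type behaviour of $F$ coming from \eqref{E, F-def}: since $F(x) = \sum_{m\ge 0} (-1)^{w(m)} \up(x - 2m - 1)$ and $\up$ is supported on $[-1,1]$, for $x$ in a bounded range only finitely many terms contribute, and the coefficients $(-1)^{w(m)}$ have the Thue–Morse cancellation property $(-1)^{w(m)} + (-1)^{w(m + 2^j)} = 0$ when the $2^j$ bit of $m$ is zero. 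Matching the shift $2^{n+1+k}$ in \eqref{E:plan-reduced} against a shift by $2^j$ in the index $m$ (with $2m+1$ shifting by $2^{j+1}$) identifies $j$ from $k$ and $n$, and then the two sums defining $F(2^{n+1}t + 2^{n+1+k})$ and $F(2^{n+1}t)$ pair up term-by-term with opposite signs over the range $0 \le t < 2^k$, giving \eqref{E:plan-reduced} integrand-wise, hence the lemma.

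An alternative and possibly cleaner route, which I would pursue in parallel, is pure induction on $n$ via integration by parts, using only $F'(x) = 2F(2x)$ and the base case $n = 0$. For $n = 0$ the statement reads $\int_{2^k}^x F'(t)\,dt = -\int_0^y F'(t)\,dt$, i.e. $F(x) - F(2^k) = -(F(y) - F(0)) = -F(y)$; since $F(0) = 0$ this is $F(x) + F(x - 2^k) = F(2^k)$, which for $2^k \le x < 2^{k+1}$ is a known reflection/translation identity for $F$ (a consequence of $\up(t) + \up(1-t) = 1$ after unfolding, or directly of \eqref{E, F-def}). For the inductive step, integrate by parts in $\int_{2^k}^x (x-t)^n F^{(n+1)}(t)\,dt$: the boundary term at $t = x$ vanishes (factor $(x-t)^n$), the boundary term at $t = 2^k$ produces $-(x - 2^k)^n F^{(n)}(2^k) = -y^n F^{(n)}(2^k)$, and the remaining integral is $n\int_{2^k}^x (x-t)^{n-1} F^{(n)}(t)\,dt$; doing the same on the right-hand side and applying the induction hypothesis at level $n-1$ reduces everything to checking that the leftover boundary contributions cancel, which again comes down to the $n=0$ identity evaluated at the appropriate scaled point together with \eqref{E:FabiusD}.

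The main obstacle I anticipate is the bookkeeping in the first approach — correctly identifying which dyadic shift $j$ in the Thue–Morse index corresponds to the shift $2^{n+1+k}$ in the argument, and verifying that the range $0 \le t < 2^k$ is exactly what is needed for the term-by-term cancellation (no ``edge'' terms where one of $(-1)^{w(m)}$, $(-1)^{w(m+2^j)}$ contributes but not the other). If that turns out delicate, the induction-by-parts argument is more robust, and there the only real content is the $n=0$ reflection identity $F(x) + F(x-2^k) = F(2^k)$ for $2^k \le x < 2^{k+1}$; I would isolate and prove that first, likely by differentiating both sides (using $F'(x) = 2F(2x)$ and noting $2x$ and $2(x-2^k)$ land symmetrically about $2^{k+1}$) and checking one value, or by direct appeal to \eqref{E, F-def}.
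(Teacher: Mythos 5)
Your first route is sound and is essentially the paper's own proof in different clothing, so let me address it briefly before turning to the real problems. After applying \eqref{E:FabiusD} with $k$ replaced by $n+1$ and translating $t\mapsto t+2^k$, everything does reduce to your identity \eqref{E:plan-reduced}, and the pointwise statement you hesitate over --- $F(s)+F(s+2^{N})=0$ for $0\le s<2^{N}$ --- is in fact \emph{true} for every integer $N\ge1$; your own Thue--Morse argument proves it (for $s\in[0,2^N)$ only the index $m=\lfloor s/2\rfloor<2^{N-1}$ contributes to \eqref{E, F-def}, the shift replaces $m$ by $m+2^{N-1}$, whose extra bit flips the sign, and there are no edge terms because $\up(\pm1)=0$). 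So you should not have talked yourself out of it. The paper extracts exactly the same sign flip, but phrased as $F^{(k+n+1)}(u+1)=-F^{(k+n+1)}(u)$ on $(0,1)$ and derived from $F=\up(\,\cdot\,-1)$ on $[0,2]$ together with the functional equation of $\up$, rather than from the Thue--Morse expansion; via \eqref{E:FabiusD} the two statements are identical. One caveat your reduction makes visible: the argument needs $N=n+k+1\ge1$. This restriction is real --- for $n=0$ and $x=3/4$ (so $k=-1$, $y=1/4$) the lemma would assert $F(3/4)-F(1/2)=-F(1/4)$, i.e.\ $F(3/4)+F(1/4)=1/2$, contradicting $F(t)+F(1-t)=1$ --- but it is satisfied in the only place the lemma is used (Proposition \ref{P:Res}, where $k=-n$ and $N=1$), and the paper's proof tacitly assumes it as well when it differentiates $F'(u+1)=-F'(u)$ a further $k+n\ge0$ times.

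Your fallback route, the induction on $n$ by parts, is the one with a genuine gap, and you should not rely on it as the ``more robust'' option. Integrating by parts on the left produces the boundary term $-y^{n}F^{(n)}(2^{k})$ plus $n$ times the level-$(n-1)$ left integral, while on the right the boundary term vanishes because $F^{(n)}(0)=0$. Hence the induction closes if and only if $F^{(n)}(2^{k})=2^{\binom{n+1}{2}}F(2^{n+k})=0$, i.e.\ if and only if $n+k\ge1$; there is no cancellation of ``leftover boundary contributions'' against a scaled $n=0$ identity, as you suggest. Moreover the base case $n=0$, namely $F(x)+F(x-2^{k})=F(2^{k})$, holds for $k\ge0$ but is \emph{false} for $k<0$ (same counterexample as above), and $k<0$ is precisely the regime needed for Proposition \ref{P:Res}. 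So in the case that matters the induction cannot start, and the step you wave at --- ``reduces everything to the $n=0$ identity'' --- is exactly where it breaks. Commit to the first route, state the hypothesis $n+k+1\ge1$, and prove the antiperiodicity $F(s+2^{N})=-F(s)$ cleanly; that is a complete proof.
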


\begin{proof}
Putting $t=2^k(1+u)$ in the integral we obtain
\[\int_{2^k}^x(x-t)^nF^{(n+1)}(t)\,dt=2^k\int_{0}^{2^{-k}y}(y-2^k u)^nF^{(n+1)}(2^k(1+u))\,du.\]
Letting $a=2^{-k}y$, we have $0\le a<1$  and we obtain
\[\int_{2^k}^x(x-t)^nF^{(n+1)}(t)\,dt=2^{k(n+1)}\int_{0}^{a}(a-u)^nF^{(n+1)}(2^k(1+u))\,du.\]
Differentiating $F^{(k)}(x)=2^{\binom{k+1}{2}}F(2^k t)$ repeatedly we obtain
\[F^{(k+n+1)}(t)=2^{\binom{k+1}{2}} 2^{k(n+1)}F^{(n+1)}(2^k t).\]
It follows that 
\[2^{k(n+1)}\int_{0}^{a}(a-u)^nF^{(n+1)}(2^k(1+u))\,du=2^{-\binom{k+1}{2}}\int_{0}^{a}(a-u)^nF^{(k+n+1)}(u+1)\,du.\]
When $0<u<a$ we have $1<u+1<2$. For $0<x<2$ we have by definition $F(x)=\up(x-1)$. 
Therefore $F'(x)=2\up(2x-1)-2\up(2x-3)$. This implies for $0<u<1$ that $F'(u)=2\up(2u-1)$ and $F'(u+1)=-2\up(2u-1)=-F'(u)$
Hence
\[2^{-\binom{k+1}{2}}\int_{0}^{a}(a-u)^nF^{(k+n+1)}(u+1)\,du=-2^{-\binom{k+1}{2}}\int_{0}^{a}(a-u)^nF^{(k+n+1)}(u)\,du\]
It only remains to reverse our steps.  We have
\begin{multline*}
-2^{-\binom{k+1}{2}}\int_{0}^{a}(a-u)^nF^{(k+n+1)}(u)\,du=
-\int_{0}^{a}(a-u)^n 2^{k(n+1)}F^{(n+1)}(2^k u)\,du
\\
-\int_{0}^{y}(y-v)^n F^{(n+1)}(v)\,dv
\end{multline*}

\end{proof}

\begin{prop}\label{P:Res}
Let $x>0$ be a real number. Let  $n$ be the unique integer  such that $2^{-n}\le x< 2^{-n+1}$ and put $y=x-2^{-n}$.
Then we have 
\begin{equation}
F(x)=-F(y)+\sum_{0\le k\le n} 2^{\binom{k+1}{2}-\binom{n-k}{2}}\frac{d_{n-k}}{(n-k)!}\frac{y^k}{k!},
\end{equation}
where the sum is $0$ when $n<0$. 
\end{prop}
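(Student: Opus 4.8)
The plan is to read the claimed identity as Taylor's formula for $F$ expanded at the point $2^{-n}$, the ``remainder'' $-F(y)$ being produced by Lemma~\ref{L:taylor}. Recall that $F$ is of class $C^\infty$ on $\R$. Assume first $n\ge 0$. Taylor's theorem with the integral form of the remainder, applied to $F$ at $2^{-n}$ and evaluated at $x$, gives
\[
F(x)=\sum_{k=0}^{n}\frac{F^{(k)}(2^{-n})}{k!}\,(x-2^{-n})^k+\frac{1}{n!}\int_{2^{-n}}^{x}(x-t)^n F^{(n+1)}(t)\,dt .
\]
The unique integer $k$ with $2^{k}\le x<2^{k+1}$ is here $k=-n$, so Lemma~\ref{L:taylor} (with its order equal to our $n$ and its ``$k$'' equal to $-n$) rewrites the remainder integral as $-\int_0^{y}(y-t)^n F^{(n+1)}(t)\,dt$, where $y=x-2^{-n}$.

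Next I would evaluate this last integral. By \eqref{E:FabiusD} we have $F^{(j)}(0)=2^{\binom{j+1}{2}}F(0)=0$ for every $j\ge 0$, so Taylor's formula for $F$ at $0$ collapses to $F(y)=\frac{1}{n!}\int_0^{y}(y-t)^n F^{(n+1)}(t)\,dt$. Hence the remainder above equals exactly $-F(y)$, and
\[
F(x)=-F(y)+\sum_{k=0}^{n}\frac{F^{(k)}(2^{-n})}{k!}\,y^k .
\]
It then remains to identify the coefficients: for $0\le k\le n$, \eqref{E:FabiusD} gives $F^{(k)}(2^{-n})=2^{\binom{k+1}{2}}F(2^{-(n-k)})$, and \eqref{E:d-n} gives $F(2^{-(n-k)})=\frac{d_{n-k}}{(n-k)!\,2^{\binom{n-k}{2}}}$, so $\frac{F^{(k)}(2^{-n})}{k!}y^k=2^{\binom{k+1}{2}-\binom{n-k}{2}}\frac{d_{n-k}}{(n-k)!}\frac{y^k}{k!}$, which is the $k$-th summand in the statement. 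This proves the formula for $n\ge 0$.

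Finally, for $n<0$ (so $2^{-n}\ge 2$) the sum is empty and the assertion reduces to $F(x)=-F(y)$. Here I would apply Lemma~\ref{L:taylor} with order $0$ and ``$k$''$=-n$, which reads $\int_{2^{-n}}^{x}F'(t)\,dt=-\int_0^{y}F'(t)\,dt$, i.e.\ $F(x)-F(2^{-n})=-(F(y)-F(0))=-F(y)$; so it suffices to know $F(2^{-n})=0$. That follows from the same order-$0$ instance of the lemma on the interval $[2^{j-1},2^{j})$ for $j=-n\ge 1$: one gets $F(x)=F(2^{j-1})-F(x-2^{j-1})$ there, and letting $x\uparrow 2^{j}$ and using continuity of $F$ yields $F(2^{j})=F(2^{j-1})-F(2^{j-1})=0$.

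I do not expect a genuine obstacle: the proof is essentially Taylor's theorem twice plus Lemma~\ref{L:taylor}. The only points requiring care are the index bookkeeping between the lemma's interval exponent $-n$ and the Taylor order $n$; the verification that $F\in C^{n+1}$ with all derivatives vanishing at $0$, which legitimizes both applications of Taylor's theorem; and the boundary conventions $0^0=1$ and ``empty sum $=0$'', which handle the degenerate cases $y=0$ and $n<0$.
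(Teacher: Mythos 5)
Your proof is correct and follows essentially the same route as the paper's: Taylor's theorem with integral remainder at $2^{-n}$, Lemma \ref{L:taylor} to convert that remainder into $-F(y)$ via a second application of Taylor's theorem at $0$, and the identification of the coefficients through \eqref{E:FabiusD} and \eqref{E:d-n}. The only immaterial difference is in the case $n<0$, where the paper invokes the vanishing of all derivatives of $F$ at the even integer $2^{-n}$ (taking $N$ arbitrary), while you instead deduce $F(2^{-n})=0$ from the order-zero instance of the lemma together with continuity.
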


\begin{proof}
We use Taylor's Theorem with integral rest 
\[F(x)=F(y+2^{-n})=\sum_{0\le k\le N}\frac{F^{(k)}(2^{-n})}{k!}y^k+\int_{2^{-n}}^x\frac{(x-t)^N}{N!} F^{(N+1)}(t)\,dt.\]

When $n<0$, the number $2^{-n}$ is an even natural number. For $k\ge0$ the derivative $F^{(k)}(2^{-n})=0$. So that for 
any $N\ge0$, 
\[F(x)=\int_{2^{-n}}^x\frac{(x-t)^N}{N!} F^{(N+1)}(t)\,dt\]
By Lemma \eqref{L:taylor},
\[\int_{2^{-n}}^x\frac{(x-t)^N}{N!} F^{(N+1)}(t)\,dt=-\int_{0}^y\frac{(x-t)^N}{N!} F^{(N+1)}(t)\,dt.\]
Again, by Taylor's Theorem, applied at the point $0$ where all derivatives are also $=0$,  
this is equal to $-F(y)$ as we wanted to prove. 

For $n\ge0$ we apply again Taylor's Theorem with $N=n$.  In this case
\[F(x)=F(y+2^{-n})=\sum_{0\le k\le n}\frac{F^{(k)}(2^{-n})}{k!}y^k+\int_{2^{-n}}^x\frac{(x-t)^n}{n!} F^{(n+1)}(t)\,dt.\]
Lemma \eqref{L:taylor} gives us 
\[\int_{2^{-n}}^x\frac{(x-t)^n}{n!} F^{(n+1)}(t)\,dt=-\int_0^y \frac{(y-t)^n}{n!} F^{(n+1)}(t)\,dt=-F(y).\]
The last equality using Taylor's Theorem at the point $t=0$ where all derivatives of $F$ are equal to $0$. 

To finish the proof notice that by \eqref{E:FabiusD} and \eqref{E:d-n} we have for $0\le k\le n$,
\[F^{(k)}(2^{-n})=2^{\binom{k+1}{2}}F(2^k 2^{-n})=2^{\binom{k+1}{2}}\frac{d_{n-k}}{(n-k)! 2^{\binom{n-k}{2}}}.\]
\end{proof}

\begin{rem}
Notice that  Proposition \ref{P:Res} reduces the computation of $F(x)$  to that of $F(y)$. 
If $x$ is dyadic a repeated use ends in $F(0)$. The number of steps being merely the number of $1$ 
in the  dyadic representation of $x$.  If $x$ is not dyadic a finite number of steps reduce the computation to that 
of  $F(y)$ for some $y$ small enough so that  $F(y)$ is less than the absolute error requested. 
This procedure is implemented by Reshetnikov in his answer to \href{http://mathematica.stackexchange.com/questions/120331/how-do-i-numerically-evaluate-and-plot-the-fabius-function}{Pierrot Bolnez question in Mathematica StackExchange}.
\end{rem}

\section{The least common denominator of $\up(a/2^n)$.}

Reshetnikov's conjecture tries to determine the denominator of the dyadic number $F(2^{-n})$.  It is more natural to 
consider the common denominator $D_n$ of all numbers $F(a/2^n)$ where $a$ is an odd number. This is a finite number
because we only have to consider the case of $a=2k+1$ for $k=0$, $1$, \dots, $2^{n-1}-1$.  

Therefore we define as follow.
\begin{defn} Let $D_n$ be le least common multiple of the denominators of the 
fractions $\up(\frac{2k+1}{2^n})$ with $2^{-n}<2k+1\le2^n$. 
\end{defn}

The first $D_n$ starting from $D_0=1$ are 
\begin{gather*}
1,\nn 2,\nn 72,\nn 288,\nn 2\,073\,600,\nn 33\,177\,600,\nn 2\,809\,213\,747\,200,\nn 179\,789\,679\,820\,800,\\
704\,200\,217\,922\,109\,440\,000,\nn 180\,275\,255\,788\,060\,016\,640\,000,\\
6\,231\,974\,256\,792\,696\,936\,191\,754\,240\,000,\nn 6\,381\,541\,638\,955\,721\,662\,660\,356\,341\,760\,000,\\
292214732887898713986916575925267070976000000,\\
1196911545908833132490410294989893922717696000000,\\
963821659256803158077786940841300728342971034894336000000,\dots
\end{gather*}

\begin{thm}\label{T:9}
For any natural number $n\ge1$ and any integer $-2^n< a< 2^n$ the number
\begin{equation}\label{Rekete}
\up(\tfrac{a}{2^n})\;n!\;2^{\binom{n+1}{2}}(2\lfloor n/2\rfloor+1)!! \prod_{k=1}^{\lfloor n/2\rfloor}(2^{2k}-1)
\end{equation}
is natural. 

Equivalently 
\[D_n\text{ divides the number } n!\;2^{\binom{n+1}{2}}(2\lfloor n/2\rfloor+1)!! \prod_{k=1}^{\lfloor n/2\rfloor}(2^{2k}-1).\]

\end{thm}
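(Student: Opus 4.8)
The plan is to read the result off the explicit formula \eqref{E:explicit2}, combined with the factorization \eqref{E:cf} of $c_k$ as $F_k$ divided by $(2k+1)!!\prod_{\nu=1}^k(2^{2\nu}-1)$, where each $F_k\in\N$ by Proposition \ref{PD:F}. Put
\[M_n:=n!\,2^{\binom{n+1}{2}}(2\lfloor n/2\rfloor+1)!!\prod_{k=1}^{\lfloor n/2\rfloor}(2^{2k}-1),\qquad P_n:=(2\lfloor n/2\rfloor+1)!!\prod_{\ell=1}^{\lfloor n/2\rfloor}(2^{2\ell}-1),\]
so that $M_n$ is the factor appearing in \eqref{Rekete} and $M_n=n!\,2^{\binom{n+1}{2}}P_n$. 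Since $\up$ is even, $\up(a/2^n)=\up(|a|/2^n)$; for $a=0$ the expression \eqref{Rekete} is just $M_n$, visibly a positive integer, so we may assume $1\le a<2^n$. Put $a':=2^n-a$, so that $1\le a'\le 2^n-1$ and $\up(a/2^n)=\up(1-a'/2^n)$ is exactly the left-hand side of \eqref{E:explicit2} with parameter $a'$.

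Multiplying \eqref{E:explicit2} (for $a'$) by $M_n$, the prefactor $2^{-\binom{n+1}{2}}/n!$ cancels the $n!\,2^{\binom{n+1}{2}}$ in $M_n$, and we obtain
\[\up(\tfrac{a}{2^n})\,M_n=P_n\sum_{k=0}^{\lfloor n/2\rfloor}\binom{n}{2k}\,c_k\,S_k,\qquad S_k:=\sum_{h=0}^{a'-1}(-1)^{w(h)}(2a'-2h-1)^{n-2k}.\]
Each $S_k$ is a signed sum of odd integers raised to a nonnegative power, hence $S_k\in\Z$, and $\binom{n}{2k}\in\Z$, so it suffices to show that $P_n\,c_k\in\Z$ for every $0\le k\le\lfloor n/2\rfloor$. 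Substituting \eqref{E:cf},
\[P_n\,c_k=F_k\cdot\frac{(2\lfloor n/2\rfloor+1)!!}{(2k+1)!!}\cdot\prod_{\ell=k+1}^{\lfloor n/2\rfloor}(2^{2\ell}-1);\]
since $k\le\lfloor n/2\rfloor$, the double-factorial quotient equals the product of the odd integers from $2k+3$ to $2\lfloor n/2\rfloor+1$ (an empty product, $=1$, when $k=\lfloor n/2\rfloor$) and the remaining quotient is a genuine product of integers, while $F_k\in\N$; hence $P_n\,c_k\in\Z$. It follows that every summand above is an integer, so $\up(a/2^n)\,M_n\in\Z$, and since $|a|<2^n$ forces $a/2^n\in(-1,1)$ where $\up$ is strictly positive, this integer is positive, i.e. natural. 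The "equivalent" reformulation is then immediate: $D_n$, being the least common multiple of the denominators of the numbers $\up(\frac{2k+1}{2^n})$, each of which divides $M_n$ by what has just been shown, itself divides $M_n$.

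There is no real obstacle here once the bookkeeping is set up; the one point deserving care is checking that after cancelling the factor $n!\,2^{\binom{n+1}{2}}$ the surviving odd factor $P_n$ is precisely large enough to clear the denominators $(2k+1)!!\prod_{\nu=1}^k(2^{2\nu}-1)$ of all the $c_k$ that appear, which it is, because the summation index $k$ in \eqref{E:explicit2} never exceeds $\lfloor n/2\rfloor$. (The same computation, incidentally, shows that $M_n$ is essentially optimal only up to the power of $2$, which motivates the $2$-adic refinements considered later.)
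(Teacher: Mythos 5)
Your proof is correct and takes essentially the same approach as the paper: both read the statement off the explicit formula for $\up$ at dyadic points and observe that the odd factor $(2\lfloor n/2\rfloor+1)!!\prod_{k\le\lfloor n/2\rfloor}(2^{2k}-1)$ clears the denominators of all the $c_k$ that occur, via the integrality of the $F_k$ from \eqref{E:cf}. The only differences are cosmetic (you make the substitution $a'=2^n-a$ and the positivity check explicit where the paper leaves them implicit).
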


\begin{proof}
By \eqref{E:explicit} we know that 
\[\up(\tfrac{a}{2^n})\;n!\;2^{\binom{n+1}{2}}\]
is a linear combination with integer coefficients of the rational numbers $c_k$ with 
$0\le k\le n/2$. We have $c_0=1$ and, for $n\ge1$, by \eqref{E:cf} the numbers 
\[F_k = c_k(2k+1)!! \prod_{r=1}^{k}(2^{2r}-1)\]
are natural. Therefore, if $m=\lfloor n/2\rfloor$ then all the numbers
\[c_k (2m+1)!! \prod_{r=1}^{m}(2^{2r}-1), \qquad 0\le k\le n/2,\]
are natural.  It follows that the number in \eqref{Rekete} is an integer. Since 
it is also positive it is a natural number.
\end{proof}
\begin{rem}
Theorem \ref{T:9} applies to any $a$ and not only for $2^n-1$ as in Reshetnikov's conjecture.
The power of $2$ is better in Reshetnikov's conjecture than in Theorem \ref{T:9}, but 
for other primes  Theorem \ref{T:9} is better than Reshetnikov's conjecture.  It is thought that 
the power of $2$ can be improved in Theorem \ref{T:9}. 
\end{rem}

\begin{prop}
For any natural number $n$ the number $D_n$ is a multiple of the denominator of 
$d_n/(2^{\binom{n}{2}}\,n!)$.
\end{prop}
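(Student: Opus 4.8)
The plan is to show directly that the reduced denominator of $F(2^{-n})$ is one of the denominators whose least common multiple defines $D_n$; the claimed divisibility then follows at once from \eqref{E:d-n}.

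First I would recall that, as noted just before Proposition~\ref{PD:F}, $F(2^{-n})=\up(1-2^{-n})$. Writing $1-2^{-n}=\frac{2^n-1}{2^n}$, the numerator $a=2^n-1$ is odd, and for $n\ge 1$ it satisfies $2^{-n}<2^n-1\le 2^n$. Hence $\up\bigl(\frac{2^n-1}{2^n}\bigr)$ is exactly one of the fractions $\up\bigl(\frac{2k+1}{2^n}\bigr)$ with $2^{-n}<2k+1\le 2^n$ that appear in the definition of $D_n$ (it is the term $k=2^{n-1}-1$), so the denominator of $F(2^{-n})$ in lowest terms divides $D_n$.

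Next, by \eqref{E:d-n} we have $F(2^{-n})=\dfrac{d_n}{2^{\binom{n}{2}}\,n!}$. Since $F(2^{-n})$ and this expression denote the same rational number, the denominator of $d_n/(2^{\binom{n}{2}}\,n!)$ in lowest terms equals the reduced denominator of $F(2^{-n})$, which by the previous step divides $D_n$. The remaining case $n=0$ is trivial, since $D_0=1$ while $d_0/(2^{\binom{0}{2}}\,0!)=1$ has denominator $1$.

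The argument involves no serious obstacle; the only point requiring care is checking that $a=2^n-1$ is odd and lies in the index range $2^{-n}<a\le 2^n$ prescribed in the definition of $D_n$, so that $F(2^{-n})$ genuinely occurs among the values whose denominators are combined. Everything else is a direct matching of definitions.
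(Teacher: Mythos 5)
Your argument is correct and is essentially the paper's own proof: both rest on observing that $\up(1-2^{-n})=F(2^{-n})$ is one of the fractions entering the definition of $D_n$, so its reduced denominator divides $D_n$, and then identifying that value with $d_n/(2^{\binom{n}{2}}\,n!)$ via \eqref{E:d-n}. Your extra care in checking that $2^n-1$ is odd and lies in the prescribed range is a harmless elaboration of what the paper leaves implicit.
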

\begin{proof}
This follows directly from equation \eqref{E:d-n} and the definition of 
$D_n$, since $D_n$ is a multiple of the denominator of $\up(1-2^{-n})$.
\end{proof}

The quotients $D_n/\den(d_n/(2^{\binom{n}{2}}\,n!))$ for $1\le n\le 17$ are
\[1,\nn 1,\nn 1,\nn 1,\nn 1,\nn 5,\nn 1,\nn 1,\nn 1,\nn 5,\nn 1,\nn 1,\nn 1,\nn 55,\nn1,\nn13,\nn11,\nn\dots\]

\begin{conj}\label{C:nueva}
The following appear to be true:
\begin{itemize}
\item[(a)]
The quotient $A_n=2^{-\binom{n}{2}}D_n$ satisfies $A_{2n}=A_{2n+1}$ for $n\ge1$.
\item[(b)] Let $K_{n}=A_{2n-1}$ for $n\ge1$, so that 
by (a) we have $K_1=A_1$, and for $n\ge2$
\[K_n=2^{-\binom{2n-1}{2}}D_{2n-1}=2^{-\binom{2n-2}{2}}D_{2n-2}.\]
Then $2\times(2n-1)!$ divides $K_n$.
\item[(c)] The quotients $H_n=\frac{K_n}{2(2n-1)!}$ are odd integers.
\end{itemize}
\end{conj}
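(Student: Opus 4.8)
The plan is to reduce Conjecture \ref{C:nueva} to prime-by-prime statements about the valuations $\nu_2(D_n)$ and $\nu_p(D_n)$ at odd primes $p$, since a positive integer is determined by these and each of (a), (b), (c) is visibly of this type. Throughout write $S(a,m)=\sum_{h=0}^{a-1}(-1)^{w(h)}(2a-2h-1)^m$ for the Thue--Morse weighted power sums, so that \eqref{E:explicit2} reads, for odd $a$,
\[\up(1-\tfrac{a}{2^n})=\frac{2^{-\binom{n+1}{2}}}{n!}\sum_{k=0}^{\lfloor n/2\rfloor}\binom{n}{2k}c_k\,S(a,n-2k),\]
and $D_n$ is the least common multiple of the denominators of these numbers over odd $a$ with $0<a<2^n$. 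Clearing the $c_k$ by \eqref{E:cf} (using $F_k\in\N$) recovers Theorem \ref{T:9} as an upper bound, and the conjecture is a sharp form of it. Since $\binom{2n+1}{2}-\binom{2n}{2}=2n$, part (a) is equivalent to the two divisibilities $D_{2n+1}\mid 2^{2n}D_{2n}$ and $2^{2n}D_{2n}\mid D_{2n+1}$; and, granting (a), parts (b)--(c) say that $\nu_2(D_{2n-1})=\binom{2n-1}{2}+1+\nu_2((2n-1)!)$ and that the odd part of $(2n-1)!$ divides the odd part of $D_{2n-1}$ with odd quotient $H_n$.

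For part (a) the inclusion $D_{2n+1}\mid 2^{2n}D_{2n}$ should come from Proposition \ref{P:Res}: one application writes $F(a/2^{2n+1})$ (with $a$ odd) as $-F(y)$ plus a polynomial in $y$ whose coefficients involve the $d_j/j!$, with $y$ dyadic of strictly shorter binary length, and by \eqref{E:defG} those coefficients have denominators built only from powers of $2$ and the odd factors $2^j-1$; iterating down to $F(0)$ and bounding the accumulated power of $2$ and the accumulated odd part should yield a divisor of $2^{2n}D_{2n}$. The reverse inclusion $2^{2n}D_{2n}\mid D_{2n+1}$ is the harder half: for each prime power $q\mid 2^{2n}D_{2n}$ one must produce an odd $a$, $0<a<2^{2n+1}$, with $q$ in the denominator of $\up(1-a/2^{2n+1})$, the natural candidates being $a$ near $2^{2n+1}$ (few terms in $S$) and $a=2^{2n}\pm1$; computing the exact denominators of these special values already forces a lemma on $\nu_p(S(a,m))$.

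Parts (b) and (c) require the exact valuation of $D_{2n-1}$, not merely the bound of Theorem \ref{T:9}. For odd primes this means substituting the standard formulas for the $p$-adic valuation of factorials, double factorials and binomial coefficients into the master formula, checking that the products $\prod_\nu(2^{2\nu}-1)$ of \eqref{E:cf} cancel against the $c_k$ so that no odd prime divisor of any $2^{2\nu}-1$ survives in a denominator, and proving that the $p$-adic denominator, maximised over odd $a$, equals exactly the $p$-part of $H_n$ times that of $(2n-1)!$; this again reduces to control of $\nu_p(S(a,m))$. For the power of $2$, the lower bound $\nu_2(D_{2n-1})\ge\binom{2n-1}{2}+\nu_2((2n-1)!)+1$ is already available from the abstract's value $\nu_2(F(2^{-(2n-1)}))=-\binom{2n-1}{2}-\nu_2((2n-1)!)-1$; what remains is the matching upper bound --- that no $\up(1-a/2^{2n-1})$ has a strictly worse power of $2$ in its denominator than $F(2^{-(2n-1)})$ does --- together with the claim that no further factor $2$ appears, i.e.\ that $\min_k\nu_2\big(\binom{2n-1}{2k}\,c_k\,S(a,2n-1-2k)\big)$ never drops low enough to create one. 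In these terms (c) asserts precisely that the two-adic extremum over all admissible $a$ is attained, and attained only to first order.

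The main obstacle, common to all three parts, is the absence of a sufficiently precise arithmetic handle on the Thue--Morse weighted power sums $S(a,m)$ modulo prime powers. The classical vanishing $\sum_{h<2^j}(-1)^{w(h)}h^m=0$ for $m<j$ and its refinements show that heavy cancellation is present, but turning this into exact $p$-adic valuations of $S(a,m)$ for all admissible $a$ and $m$, and then tracking how these interact with the binomial coefficients and with the $c_k$ (equivalently the integers $F_k$, whose own $2$-adic structure would have to be understood in parallel) so as to reproduce the predicted denominator with no slack at odd primes and exactly one surplus power of $2$, is the crux. That is exactly the input that would upgrade the inequality of Theorem \ref{T:9} to the equalities of Conjecture \ref{C:nueva}, and it is where I expect the difficulty to concentrate.
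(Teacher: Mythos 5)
This statement is labelled a \emph{conjecture} in the paper, and the paper gives no proof of it: it offers only the numerical values of $D_n$ for $n\le 17$ (and the derived $H_n$) as evidence, and in the following section remarks that the relation of $c_n$ and $d_n$ to Bernoulli numbers ``may be useful'' for it. So there is no argument of the author's to compare yours against, and the honest question is whether your proposal itself constitutes a proof. It does not, and you say so yourself: every one of the three parts is reduced to statements about exact $p$-adic valuations of the Thue--Morse weighted power sums $S(a,m)$ and of the numbers $c_k$ (equivalently $F_k$), and none of those valuations is actually computed. Your reduction of (a) to the pair of divisibilities $D_{2n+1}\mid 2^{2n}D_{2n}$ and $2^{2n}D_{2n}\mid D_{2n+1}$ is correct bookkeeping, and the one genuinely established ingredient you invoke --- the lower bound $\nu_2(D_{2n-1})\ge\binom{2n-1}{2}+1+\nu_2((2n-1)!)$, which follows from the paper's theorem that $\nu_2\bigl(\up(1-2^{-n})\bigr)=-\binom{n}{2}-1-\nu_2(n!)$ --- is sound. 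But the matching upper bounds, the exhibition of extremal odd $a$ realizing each prime power in $D_{2n+1}$, and the claim that the odd parts work out to an odd integer $H_n$ are all left as ``the crux,'' which is to say the conjecture itself in slightly different clothing.

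To be concrete about where the plan would stall: Theorem \ref{T:9} only bounds $D_n$ from above, and the known exact valuation concerns the single value $\up(1-2^{-n})$, i.e.\ $a=1$; the conjecture is a statement about the least common denominator over \emph{all} odd $a$, so one needs either (i) a proof that $a=1$ (or some other explicit $a$) is $2$-adically extremal and that no other $a$ contributes extra odd primes beyond those in $(2n-1)!\,H_n$, or (ii) a closed form for $\nu_p\bigl(\up(a/2^n)\bigr)$ uniform in $a$. Neither is in the paper, and neither is in your proposal. Your outline is a reasonable research programme --- arguably the natural one --- but as a proof it has a genuine gap coinciding exactly with the open content of the conjecture, so it should be presented as a strategy, not as a solution.
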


The first few $H_n$ starting from $H_1=1$ are
\begin{gather*}
1,\nn 3,\nn 135,\nn 8\,505,\nn 3\,614\,625,\nn 2\,218\,656\,825,\nn 317\,988\,989\,443\,125,\\
148\,846\,103\,258\,477\,625,\nn 8\,607\,025\,920\,921\,468\,665\,625,\nn \dots
\end{gather*}

If these conjectures are true we obtain for $n\ge1$
\[D_{2n-1}=2^{1+\binom{2n-1}{2}}(2n-1)!\, H_n,\quad D_{2n}=
2^{1+\binom{2n}{2}}(2n+1)!\, H_{n+1}.\]

\section{On the nature of the numbers $d_n$.}

In this Section we show that the numerator and denominator of $2d_n$ are odd numbers.

We have noticed in the proof 
of Theorem \ref{T:8} that $\nu_2(2d_n)=\nu_2(R_n)$. So that by equation
\eqref{E:RF} it follows that $\nu_2(2d_{2n+1})=\nu_2(F_n)$. We will show
now that the numbers $F_n$ are odd.  
This will prove our result for $2d_n$ when $n$ is an odd number.
For this proof we will use Lucas's Theorem \cite{MR0023257}.

\begin{thm}[Lucas]
Let $p$ be a prime and $n$ and $k$ two positive integers that we write as
\[n=n_0+n_1p+n_2p^2+\cdots+n_a p^a,\quad k=k_0+k_1p+k_2p^2+\cdots+k_a p^a, 0\le n_j,k_j<p\]
in base $p$.  Then we have the congruence
\[\binom{n}{k}\equiv\binom{n_a}{k_a}\cdots\binom{n_2}{k_2}\binom{n_1}{k_1}\binom{n_0}{k_0}
\pmod{p}.\]
\end{thm}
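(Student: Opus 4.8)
The plan is to give the standard generating-function proof of Lucas's Theorem, carried out in the polynomial ring $\mathbb{F}_p[x]$ (equivalently, in $\Z[x]$ with coefficients read modulo $p$). The engine is the congruence $(1+x)^p\equiv 1+x^p\pmod{p}$, which holds because $p\mid\binom{p}{i}$ for $0<i<p$: the factor $p$ in the numerator $p!$ is not cancelled by $i!\,(p-i)!$. Raising this congruence to the $p$-th power repeatedly gives $(1+x)^{p^j}\equiv 1+x^{p^j}\pmod{p}$ for every $j\ge0$.

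Next, using the base-$p$ expansion $n=n_0+n_1p+\dots+n_ap^a$, I would write
\[(1+x)^n=\prod_{j=0}^{a}\bigl((1+x)^{p^j}\bigr)^{n_j}\equiv\prod_{j=0}^{a}\bigl(1+x^{p^j}\bigr)^{n_j}\pmod{p},\]
and expand each factor by the ordinary binomial theorem as $\sum_{k_j=0}^{n_j}\binom{n_j}{k_j}x^{k_j p^j}$. Multiplying these out, a monomial of the product corresponds to a choice of digits $k_j\in\{0,1,\dots,n_j\}$, and it equals $x^{k_0+k_1p+\dots+k_ap^a}$ with coefficient $\binom{n_0}{k_0}\binom{n_1}{k_1}\cdots\binom{n_a}{k_a}$.

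Finally I would extract the coefficient of $x^k$ from both sides, where $k=k_0+k_1p+\dots+k_ap^a$ is the base-$p$ expansion of $k$. On the left it is $\binom{n}{k}$; on the right, since the base-$p$ representation is unique, the only contributing tuple of digits is $(k_0,\dots,k_a)$ itself, so the coefficient is $\prod_{j}\binom{n_j}{k_j}$, and the congruence follows. There is no real obstacle here; the only point requiring care is the edge case in which some $k_j>n_j$: then $\binom{n_j}{k_j}=0$ and correspondingly the monomial $x^k$ does not occur on the right, so both sides are $\equiv0\pmod{p}$ and the product formula still holds under the usual convention $\binom{m}{r}=0$ for $r>m$. (An alternative route is induction on $a$ using $\binom{n}{k}=\binom{\lfloor n/p\rfloor}{\lfloor k/p\rfloor}\binom{n_0}{k_0}\pmod p$, but the generating-function argument is the cleanest.)
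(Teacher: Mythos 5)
Your proof is correct and complete: the congruence $(1+x)^p\equiv 1+x^p\pmod p$, iterated to $(1+x)^{p^j}\equiv 1+x^{p^j}$, the factorization over the base-$p$ digits of $n$, and the extraction of the coefficient of $x^k$ using uniqueness of base-$p$ representation (with the convention $\binom{m}{r}=0$ for $r>m$ handling the case $k_j>n_j$) is the standard generating-function proof of Lucas's theorem, and every step you give is sound. Note, however, that the paper itself does not prove this theorem at all: it states it as a known result and cites Fine's article \emph{Binomial coefficients modulo a prime} for it, so there is no proof in the paper to compare yours against. Your argument would serve perfectly well as a self-contained substitute for that citation.
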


From Lucas's Theorem we get the following application:
\begin{prop}\label{P:afterLucas}
(a) For $n\ge1$ the cardinal of odd combinatory numbers $\binom{2n+1}{2k}$ 
with $0\le k\le n$ is exactly $2^{w(n)}$.

(b) For $n\ge 1$ the number of odd combinatory numbers $\binom{2n+1}{k}$ 
with $0\le k\le 2n+1$ is  $2^{w(n)+1}$.

\end{prop}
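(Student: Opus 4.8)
The plan is to specialise Lucas's Theorem to the prime $p=2$ and put it in the usual combinatorial form: $\binom{m}{j}$ is odd if and only if, writing $m$ and $j$ in binary, every binary digit of $j$ is less than or equal to the corresponding digit of $m$; equivalently, the set of positions where $j$ has a $1$ is contained in the set of positions where $m$ has a $1$. From this it follows at once that the number of $j$ with $0\le j\le m$ and $\binom{m}{j}$ odd is $2^{w(m)}$, since such $j$ are in bijection with the subsets of the $w(m)$ one-bits of $m$.

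For part (b) I would apply this with $m=2n+1$. The binary expansion of $2n+1$ is that of $n$ shifted one place to the left with a $1$ appended in the units position, so $w(2n+1)=w(n)+1$, and the number of odd $\binom{2n+1}{k}$ with $0\le k\le 2n+1$ is $2^{w(2n+1)}=2^{w(n)+1}$.

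For part (a) I would again take $m=2n+1$ but restrict to even lower indices $j=2k$. The units digit of $2n+1$ is $1$ while that of $2k$ is $0$, so the digit condition is automatically satisfied in that position; in every higher position $2k$ carries exactly the bits of $k$ and $2n+1$ exactly the bits of $n$. Hence $\binom{2n+1}{2k}$ is odd precisely when the one-bits of $k$ form a subset of those of $n$, and the number of $k$ with $0\le k\le n$ — which is exactly the range of $k$ for which $0\le 2k\le 2n<2n+1$ — enjoying this property is $2^{w(n)}$.

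I expect no real obstacle: the only care needed is the bookkeeping of binary digit positions under $n\mapsto 2n$ and $n\mapsto 2n+1$, and the verification that in (a) the indices $2k$ with $0\le k\le n$ are precisely the even elements of $\{0,1,\dots,2n+1\}$. As a consistency check one can observe that (a) counts exactly half of the odd binomial coefficients in (b), the complementary half (with odd lower index) being obtained from the involution $k\mapsto 2n+1-k$, which preserves the oddness of $\binom{2n+1}{k}$ and reverses the parity of $k$ because $2n+1$ is odd.
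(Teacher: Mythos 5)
Your proof is correct and follows essentially the same route as the paper: both apply Lucas's theorem at $p=2$ to characterise odd binomial coefficients by binary digit domination, note that the last digit of $2n+1$ makes the units position harmless for even lower indices, and count the admissible indices as subsets of the one-bits of $n$. Your added consistency check via the involution $k\mapsto 2n+1-k$ is a nice touch but not needed.
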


\begin{proof}
Let the binary expansion of $n$ be $\varepsilon_r\varepsilon_{r-1}\dots\varepsilon_0$, that is, 
\[n=\varepsilon_r2^r+\varepsilon2^{r-1}+\cdots \varepsilon_0,\quad \text{with }
\varepsilon_j\in\{0,1\}.\]
The expansion of $2n+1$ is $\varepsilon_r\varepsilon_{r-1}\dots\varepsilon_01$.
If $\binom{2n+1}{2k}$ is odd $k\le n$, then $k\le 2^{r+1}$. Hence its 
binary expansion is
of the form $k=\delta_r\cdots \delta_0$. Therefore Lucas's Theorem says that
\[\binom{2n+1}{2k}\equiv \binom{\varepsilon_r}{\delta_r}\cdots
\binom{\varepsilon_0}{\delta_0}\binom{1}{0}\pmod{2}\]
Here each factor $\binom{0}{1}=0$ and each factor $\binom{1}{\delta}=1$ for
any value of $\delta$.  So $\binom{2n+1}{2k}\equiv1$ is equivalent to saying that
$ \delta_j=0$ when $\varepsilon_j=0$. Therefore the number of odd combinatory 
numbers is $2^{m}$, where $m$ is the number of digits equal to $1$ in 
the expansion of $n$. But 
this is to say $m=w(n)$. 

The second assertion can be proved in the same way.
\end{proof}

\begin{prop}
All the numbers $F_n$  are odd. 
\end{prop}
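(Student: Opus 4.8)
The plan is to argue by strong induction on $n$ using the recurrence \eqref{Rec:F} for the $F_n$, and to control the parity of each summand by means of Proposition \ref{P:afterLucas}(a). Recall that
\[F_n=\sum_{k=0}^{n-1}F_k\binom{2n+1}{2k}\frac{(2n-1)!!}{(2k+1)!!}\prod_{\nu=k+1}^{n-1}(2^{2\nu}-1),\qquad n\ge1,\]
with $F_0=1$. Every factor $2^{2\nu}-1$ is odd, so the product over $\nu$ contributes nothing to the parity. Likewise $(2n-1)!!$ and $(2k+1)!!$ are products of odd numbers, hence the ratio $(2n-1)!!/(2k+1)!!=(2k+3)(2k+5)\cdots(2n-1)$ is an integer that is \emph{odd}. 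Thus, modulo $2$, the recurrence collapses to
\[F_n\equiv\sum_{k=0}^{n-1}F_k\binom{2n+1}{2k}\pmod 2.\]

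Now I would complete the sum on the right to run up to $k=n$: since $\binom{2n+1}{2n}=2n+1$ is odd and $F_n$ is (inductively) an integer, adding and subtracting $F_n$ gives
\[2F_n\equiv\sum_{k=0}^{n}F_k\binom{2n+1}{2k}\pmod 2,\]
which merely says $0\equiv\sum_{k=0}^{n}F_k\binom{2n+1}{2k}\pmod 2$; that is not quite what is wanted, so instead I keep the sum truncated at $n-1$ and use the induction hypothesis $F_k$ odd for $0\le k\le n-1$. Then
\[F_n\equiv\#\{\,k:0\le k\le n-1,\ \binom{2n+1}{2k}\text{ is odd}\,\}\pmod 2.\]
By Proposition \ref{P:afterLucas}(a) the number of $k$ with $0\le k\le n$ for which $\binom{2n+1}{2k}$ is odd equals $2^{w(n)}$; removing the single end term $k=n$, whose binomial coefficient $\binom{2n+1}{2n}=2n+1$ is always odd, leaves exactly $2^{w(n)}-1$ admissible indices in the truncated range. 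Since $n\ge1$ forces $w(n)\ge1$, the quantity $2^{w(n)}-1$ is odd, and therefore $F_n$ is odd. The base case $F_0=1$ is immediate.

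The one delicate point is making sure the counting is applied to the correct index range: Proposition \ref{P:afterLucas}(a) counts odd $\binom{2n+1}{2k}$ over $0\le k\le n$, whereas the recurrence \eqref{Rec:F} runs only over $0\le k\le n-1$, so one must subtract the contribution of the top index $k=n$ — and the relevant fact is precisely that $\binom{2n+1}{2n}$ is odd, so exactly one term is removed, turning the even count $2^{w(n)}$ into the odd count $2^{w(n)}-1$. Everything else (the odd factorials-of-odd-numbers, the odd Mersenne-type factors $2^{2\nu}-1$) is routine parity bookkeeping, and the induction hypothesis is used only to replace each $F_k$ by $1$ modulo $2$.
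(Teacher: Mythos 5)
Your proof is correct and follows essentially the same route as the paper: induction via the recurrence \eqref{Rec:F}, observing that the double-factorial ratio and the Mersenne factors are odd, and then counting the odd binomial coefficients $\binom{2n+1}{2k}$ over $0\le k\le n-1$ via Proposition \ref{P:afterLucas}(a), obtaining $2^{w(n)}-1$, which is odd since $w(n)\ge1$. The brief detour about completing the sum to $k=n$ is harmless since you correctly abandon it and return to the direct count.
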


\begin{proof}
We have $F_0=F_1=1$. We proceed by induction assuming that we have proved that
all $F_k$ are odd numbers for $0\le k\le n-1$. 
Equation \eqref{Rec:F} gives $F_n$ as a sum of terms. Each term is a product 
of an odd number for a combinatory number $\binom{2n+1}{2k}$, with $k$ running on 
$0\le k\le n-1$. The missing combinatory number 
(that is, for $k=n$ is $\binom{2n+1}{2n}=2n+1$) is an 
odd number. So, by Proposition \ref{P:afterLucas}, the term adding to $F_n$ are even except for $2^{w(n)}-1$ of them. Since 
$w(n)\ge1$ this implies that $F_n$ is odd.
\end{proof}

\begin{thm}
For all $n\ge1$ we have $\nu_2(2d_n)=0$, that is, the rational number $2d_n$ is the 
quotient of two odd numbers.
\end{thm}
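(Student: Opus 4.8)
The plan is to split the statement into the odd and even cases, exactly mirroring the structure already used for $R_n$ in Theorems \ref{T:6} and \ref{T:8}. For $n$ odd the work is essentially done: it was observed just above that $\nu_2(2d_{2n+1})=\nu_2(R_{2n+1})=\nu_2(F_n)$, and the preceding proposition establishes that every $F_n$ is odd, so $\nu_2(2d_{2n+1})=0$. What remains, and where the real content lies, is the even case: one must show both that $\nu_2(2d_{2n})\ge 0$ (already proved in Theorem \ref{T:8}) and, the genuinely new assertion, that $\nu_2(2d_{2n})\le 0$, i.e.\ that the numerator of $2d_{2n}$ is also odd.

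First I would set up the induction on the even index using the recurrence \eqref{E:d2n=}, namely
\[
(2n+1)(2^{2n}-1)\,2d_{2n}=\sum_{k=0}^{2n-1}\binom{2n+1}{k}2d_k.
\]
The factor $(2n+1)(2^{2n}-1)$ on the left is odd, so $\nu_2(2d_{2n})=\nu_2\bigl(\sum_{k=0}^{2n-1}\binom{2n+1}{k}2d_k\bigr)$. By the cases already handled ($n=0$, and all odd indices) together with the induction hypothesis for smaller even indices, each $2d_k$ with $0\le k\le 2n-1$ is a ratio of odd numbers, hence $2$-adically a unit; so the $2$-adic valuation of the sum is governed purely by how many of the binomial coefficients $\binom{2n+1}{k}$, $0\le k\le 2n-1$, are odd. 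The key step is then a parity count: I would invoke Proposition \ref{P:afterLucas}(b), which says that the number of odd $\binom{2n+1}{k}$ with $0\le k\le 2n+1$ equals $2^{w(n)+1}$. Removing the two extremal ones, $\binom{2n+1}{2n}=2n+1$ and $\binom{2n+1}{2n+1}=1$, both odd, leaves exactly $2^{w(n)+1}-2$ odd coefficients in the range $0\le k\le 2n-1$.

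The main obstacle is that $2^{w(n)+1}-2$ is always even, so a naive count only shows the sum is $2$-adically \emph{even} — it does not immediately pin down the valuation as $0$ versus something larger. To get the exact answer I would argue more carefully: the sum $\sum_{k=0}^{2n-1}\binom{2n+1}{k}2d_k$ equals (using \eqref{E:d2n=} in reverse) $(2n+1)(2^{2n}-1)2d_{2n}$, so proving $\nu_2(2d_{2n})=0$ is equivalent to proving $\nu_2$ of that left-hand expression is $0$; alternatively, one can bring the two omitted terms back in and write
\[
(2n+1)(2^{2n}-1)\,2d_{2n}-(2n+1)\,2d_{2n}-2d_{2n+1}=\sum_{k=0}^{2n-1}\binom{2n+1}{k}2d_k-(2n+1)2d_{2n}-2d_{2n+1},
\]
which rearranges to the full binomial identity $\sum_{k=0}^{2n+1}\binom{2n+1}{k}2d_k = $ (left side with the $2^{2n}$ term) $\cdots$; the cleanest route is to note that reducing \eqref{E:d2n=} modulo $2$ gives $2d_{2n}\equiv \sum_{k:\,\binom{2n+1}{k}\text{ odd},\,k\le 2n-1} 2d_k \pmod 2$ in the ring of $2$-adic integers, and since each summand is a $2$-adic unit and there are an even number $2^{w(n)+1}-2$ of them, I must instead track the valuation of the \emph{full} alternating-free sum by relating it back to $2d_{2n+1}$ via the odd-index recurrence, where $\nu_2$ is already known to be $0$. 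Concretely: from the $n\to n$ instance of \eqref{E:d-recur} at the odd index $2n+1$ one gets another expression for $\sum_{k=0}^{2n}\binom{2n+2}{k}2d_k$, and comparing the two recurrences (even index $2n$ and odd index $2n+1$) isolates $2d_{2n}$ as a $2$-adic unit times $2d_{2n+1}$ plus lower-valuation-known terms. I expect this comparison of the two recurrences, done carefully modulo $2$ using the Lucas-type counts of Proposition \ref{P:afterLucas}, to be the crux; once it is in place, $\nu_2(2d_{2n})=0$ follows by induction, completing the proof for all $n\ge 1$.
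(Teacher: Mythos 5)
Your setup for the even case is the right one --- and it is in fact the paper's own argument --- but it contains one concrete error that derails you. You assert that ``each $2d_k$ with $0\le k\le 2n-1$ is a ratio of odd numbers, hence $2$-adically a unit,'' including $k=0$. That is false: $d_0=1$, so $2d_0=2$ and $\nu_2(2d_0)=1$ (this is exactly why the theorem is stated only for $n\ge1$). Because you treat the $k=0$ term as a unit, you count the odd binomial coefficients over the range $0\le k\le 2n-1$ and get $2^{w(n)+1}-2$ of them (you removed only the two extremal coefficients $\binom{2n+1}{2n}$ and $\binom{2n+1}{2n+1}$, forgetting that $\binom{2n+1}{0}=1$ must also be discarded because its companion factor $2d_0$ is even, not because it lies outside the range). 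The correct count of terms that are genuinely $2$-adic units in the sum $\sum_{k=0}^{2n-1}\binom{2n+1}{k}\,2d_k$ is the number of odd $\binom{2n+1}{k}$ with $1\le k\le 2n-1$, namely $2^{w(n)+1}-3$, which is \emph{odd} since $w(n)\ge1$. The $k=0$ term contributes $2\binom{2n+1}{0}=2$, which is $\equiv 0 \pmod 2$. Hence the sum is a $2$-adic unit, the left side $(2n+1)(2^{2n}-1)\,2d_{2n}$ of \eqref{E:d2n=} has odd cofactor, and $\nu_2(2d_{2n})=0$ follows at once by induction. (The paper makes all terms honest integers first by multiplying through by the odd number $N_{2n}=(4n-1)!!\prod_{k=1}^{n}(2^{2k}-1)$, which by Proposition \ref{P:Rndn} and Theorem \ref{T:8} clears all denominators; working directly in $\Z_2$ as you do is equally valid.)

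The ``main obstacle'' you describe is therefore an artifact of the miscount, and the workaround you sketch --- comparing the recurrences at indices $2n$ and $2n+1$ and extracting $2d_{2n}$ as a unit times $2d_{2n+1}$ plus controlled terms --- is never actually carried out; it ends with ``I expect this \dots to be the crux,'' which is a plan rather than a proof. Once you correct the treatment of the $k=0$ term, that entire detour becomes unnecessary and your original direct argument closes. The odd-index half of your proposal ($\nu_2(2d_{2n+1})=\nu_2(F_n)=0$ via the oddness of $F_n$) is correct and matches the paper.
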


\begin{proof}
We have seen that $F_n$ is an odd number and that this implies $\nu_2(2d_{2n+1})=0$.
So we only need to show this for $2d_{2n}$ and $n\ge1$ (notice that $2d_0=2$, so that
$\nu_2(2d_0)=1$). 

We have shown (Theorem \ref{T:8}) that $R_n$ are integers. By Proposition \ref{P:Rndn}
there is an odd number
\[N_n=(2n-1)!!\prod_{k=1}^{\lfloor n/2\rfloor}(2^{2k}-1),\]
such that $2d_n N_n=R_n$ is an integer. It is easy to see that for $k\le n$
we have $N_k\mid N_n$.  Therefore for all $1\le k\le n$ we have $d_kN_n$
is an integer. 

We proceed by induction, assuming we have proved that $\nu(2d_k)=0$ for $1\le k<2n$
and we have to show that $\nu_2(2d_{2n})=0$.
We multiply \eqref{E:d2n=} for $n\ge1$  by $N_{2n}$
and obtain
\[\nu_2(2d_{2n})=\nu_2\bigl((2n+1)(2^{2n}-1)2d_{2n}N_{2n}\bigr)
=\nu_2\Bigl(\sum_{k=0}^{2n-1}\binom{2n+1}{k}2d_kN_{2n}\Bigr)\]
By the induction hypothesis $2d_kN_{2n}$ is an odd natural number for $1\le k\le 2n-1$
and it is even for $k=0$. So we only need to show that there are an odd number of
odd combinatory numbers $\binom{2n+1}{k}$ with $1\le k\le 2n-1$. This follows easily 
from Propostition \ref{P:afterLucas}.
\end{proof}

\begin{thm}
The number $R_n$ is odd for $n\ge1$. Or equivalently 
\begin{equation}
\nu_2\bigl(\up(1-2^{-n})\bigr)=-\binom{n}{2}-1-\nu_2(n!).
\end{equation}
\end{thm}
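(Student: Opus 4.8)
The plan is to derive both formulations as immediate corollaries of the preceding theorem ($\nu_2(2d_n)=0$ for all $n\ge1$), combined with two identities already in hand: the equality $\nu_2(R_n)=\nu_2(2d_n)$ — noted in the proof of Theorem \ref{T:8} and resting on Proposition \ref{P:Rndn} together with the oddness of the factor $N_n=(2n-1)!!\prod_{k=1}^{\lfloor n/2\rfloor}(2^{2k}-1)$ — and the relation $d_n=n!\,2^{\binom{n}{2}}\up(1-2^{-n})=n!\,2^{\binom{n}{2}}F(2^{-n})$ from \eqref{E:d-n}.

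First I would treat the oddness of $R_n$. By Theorem \ref{T:8} the number $R_n$ is a natural number, and by the previous theorem $\nu_2(R_n)=\nu_2(2d_n)=0$ for $n\ge1$. A natural number with dyadic valuation $0$ is odd, so $R_n$ is odd for all $n\ge1$.

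Next I would obtain the dyadic valuation of $\up(1-2^{-n})$. Writing $2d_n=2\cdot n!\cdot 2^{\binom{n}{2}}\cdot\up(1-2^{-n})$ and using additivity of $\nu_2$ over products gives $\nu_2(2d_n)=1+\nu_2(n!)+\binom{n}{2}+\nu_2(\up(1-2^{-n}))$. Since the left-hand side is $0$ for $n\ge1$, solving yields $\nu_2\bigl(\up(1-2^{-n})\bigr)=-\binom{n}{2}-1-\nu_2(n!)$, which is the asserted formula (and identical to $\nu_2(F(2^{-n}))$ since $F(2^{-n})=\up(1-2^{-n})$). The equivalence of the two statements in the theorem is then clear: the oddness of $R_n$ is exactly the assertion $\nu_2(2d_n)=0$, which the displayed valuation formula re-expresses through \eqref{E:d-n}.

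There is no genuine obstacle at this stage; the entire difficulty has already been absorbed into the earlier theorem $\nu_2(2d_n)=0$, whose proof relied on the oddness of the $F_n$ and on counting odd binomial coefficients via Lucas's theorem (Proposition \ref{P:afterLucas}). The only care needed here is the bookkeeping of the power of $2$ in $d_n=n!\,2^{\binom{n}{2}}F(2^{-n})$ and the observation that the auxiliary factors relating $R_n$ and $d_n$ are odd.
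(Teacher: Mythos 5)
Your proposal is correct and follows essentially the same route as the paper: the oddness of $R_n$ comes from $\nu_2(R_n)=\nu_2(2d_n)=0$ via \eqref{E:Rndn} and the oddness of the auxiliary factor, and the valuation formula is the same bookkeeping with \eqref{E:d-n}. Nothing further is needed.
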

\begin{proof}
We have seen that $2d_n$ is a quotient of two odd numbers and $R_n$ is a 
natural number.   Therefore equation \eqref{E:Rndn} implies
that $R_n$ is an odd number. 

Equation \eqref{E:d-n} yields
\[0=\nu_2(2d_n)=1+\nu_2(n!)+\binom{n}{2}+\nu_2\bigl(\up(1-2^{-n})\bigr).\]
\end{proof}

\section{Relation with Bernoulli numbers.}

The numbers $c_n$ and $d_n$ are related to Bernoulli numbers. Perhaps this
may be useful to solve  Conjecture \ref{C:nueva}.

\begin{prop}
The numbers $d_n$ and $c_n$ are given by the recurrences
\begin{equation}\label{Bncn}
c_0=1,\quad c_n=\frac{1}{2^{2n}-1}\sum_{k=1}^n 2^{2n-2k}(2^{2k}-2)\binom{2n}{2k}
B_{2k}c_{n-k},
\end{equation}
and
\begin{equation}\label{Bndn}
d_0=1,\quad d_n=\frac{n 2^{n-2}}{2^n-1} d_{n-1}-\frac{1}{2^n-1}
\sum_{k=1}^{\lfloor n/2\rfloor}\binom{n}{2k}2^{n-2k}B_{2k} d_{n-2k},
\end{equation}
where $B_k$ are Bernoulli numbers $B_0=1$, $B_1=-\frac12$, $B_2=\frac16$, $B_3=0$,  \dots
\end{prop}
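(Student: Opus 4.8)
The plan is to extract both recurrences from the functional equations satisfied by the generating functions, using the classical identity $\dfrac{x}{e^x-1}=\sum_{k\ge0}\dfrac{B_k}{k!}x^k$. For the $d_n$ I would start from \eqref{E:deff}, namely $f(2x)=\dfrac{e^x-1}{x}f(x)$ with $f(x)=\sum_{n\ge0}\dfrac{d_n}{n!}x^n$. Rewrite this as $x\,f(2x)=(e^x-1)f(x)$, then multiply both sides by $\dfrac{x}{e^x-1}$ to get $\dfrac{x^2}{e^x-1}f(2x)=x\,f(x)$, i.e. $\Bigl(\sum_{k\ge0}\dfrac{B_k}{k!}x^{k+1}\Bigr)f(2x)=x\,f(x)$. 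Expanding $f(2x)=\sum_{m\ge0}\dfrac{2^m d_m}{m!}x^m$ and equating the coefficient of $x^{n}$ on both sides (the right side contributes only $d_{n-1}/(n-1)!$) gives a relation among $d_n,d_{n-1},d_{n-2},\dots$. The odd-indexed Bernoulli numbers beyond $B_1$ vanish, and the single surviving odd term is the $B_1=-\tfrac12$ contribution, which produces the isolated first term $\dfrac{n2^{n-2}}{2^n-1}d_{n-1}$; collecting the remaining even Bernoulli terms $B_{2k}$ and solving for $d_n$ (the coefficient of $d_n$ on the left being a multiple of $2^n-1$ after the $d_n$ term from the right is moved over — more precisely the $k=0$, $B_0=1$ term gives $2^n d_n$ and the right side gives no $d_n$, but one must be careful: actually $d_n$ appears on the left with coefficient $2^n$ and nowhere on the right, so rearranging with the $x f(x)$ side... let me just collect) yields precisely \eqref{Bndn} after dividing by $2^n-1$.

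For the $c_n$ I would use the product formula for $\widehat\up$ together with the one-step relation already recorded in the excerpt, $\widehat\up(z)=\dfrac{\sin\pi z}{\pi z}\,\widehat\up(z/2)$. Passing to the variable $w=2\pi z$ and the series \eqref{hatphi}, write $g(w):=\widehat\up(w/2\pi)=\sum_{n\ge0}(-1)^n\dfrac{c_n}{(2n)!}w^{2n}$, so the relation becomes $g(w)=\dfrac{\sin(w/2)}{w/2}\,g(w/2)$. Then $\dfrac{w/2}{\sin(w/2)}$ has a known expansion in terms of Bernoulli numbers, $\dfrac{t}{\sin t}=\sum_{k\ge0}(-1)^{k-1}\dfrac{(2^{2k}-2)B_{2k}}{(2k)!}t^{2k}$ (equivalently $t\cot t$ shifted), and multiplying through by it gives $\dfrac{w/2}{\sin(w/2)}g(w)=g(w/2)$. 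Expanding both sides as power series in $w^2$ and comparing the coefficient of $w^{2n}$ gives $\sum_{k}(-1)^{?}\,(\text{Bernoulli coeff})\,\dfrac{c_{n-k}}{(2n-2k)!}=\dfrac{c_n}{(2n)!2^{2n}}$; isolating $c_n$ (the $k=0$ term on the left is just $c_n/(2n)!$, and together with the right side this produces the factor $1-2^{-2n}=(2^{2n}-1)/2^{2n}$, whence the denominator $2^{2n}-1$), and bookkeeping the binomial $\binom{2n}{2k}$ that appears when one clears factorials, yields \eqref{Bncn}.

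The routine but error-prone part is the sign and power-of-two bookkeeping: one must pin down the exact coefficients in $\dfrac{t}{e^t-1}$ and $\dfrac{t}{\sin t}$ (the latter satisfies $\dfrac{t}{\sin t}=\sum_{k\ge0}\dfrac{(-1)^{k+1}(2^{2k}-2)B_{2k}}{(2k)!}t^{2k}$, so the factor $2^{2k}-2$ in \eqref{Bncn} and the factor $2^{2k}-2$ implicit via $B_{2k}$ with the $2^{n-2k}$ powers in \eqref{Bndn} come out correctly only after matching the argument halving $w\mapsto w/2$), and the passage from the halved argument to the explicit powers $2^{2n-2k}$ and $2^{n-2k}$. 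I expect that to be the main obstacle — not conceptually hard, just a place where a misplaced factor of $2$ breaks the identity. A clean way to avoid sign confusion is to verify each resulting recurrence against the first few numerically known values $c_1=\tfrac19$, $d_1=\tfrac12$, $d_2=\tfrac5{18}$ listed earlier, which immediately catches any normalization slip. Once the coefficient extraction is done correctly, both \eqref{Bncn} and \eqref{Bndn} follow by equating like powers, and the proposition is proved.
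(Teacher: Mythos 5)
Your proposal is correct and follows essentially the same route as the paper: multiply the functional equations $\frac{x}{e^x-1}f(2x)=f(x)$ and $\frac{\pi z}{\sin\pi z}\widehat\up(z)=\widehat\up(z/2)$ through by the standard Bernoulli expansions and equate coefficients. The only blemish is the extra factor of $x$ you introduce in the $d_n$ derivation (writing $\frac{x^2}{e^x-1}f(2x)=xf(x)$), which shifts the index by one and causes the momentary confusion about where $2^n d_n$ comes from; dropping that factor recovers the paper's computation exactly, and the resulting recurrences agree with \eqref{Bncn} and \eqref{Bndn}.
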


\begin{proof}
By \eqref{hatphi} and \eqref{E:deff} the functions
\[\widehat\up(z)=\sum_{n=0}^\infty (-1)^n \frac{c_n}{(2n)!} (2\pi z)^{2n},\quad
\text{and}\quad f(x)=\sum_{n=0}^\infty \frac{d_n}{n!}x^n\]
satisfy the equations
\[\frac{\pi z}{\sin\pi z}\widehat\up(z)=\widehat\up(\tfrac{z}{2}),\quad
\frac{x}{e^x-1}f(2x)=f(x).\]
Using the well known power series
\[\frac{\pi z}{\sin\pi z}=1+\sum_{n=1}^\infty(-1)^{n+1}\frac{B_{2n}(2^{2n}-2)}{(2n)!}(\pi z)^{2n},\quad \frac{x}{e^x-1}=\sum_{n=0}^\infty \frac{B_n}{n!} x^n\]
and equating coefficients of the same power of the variable we obtain our 
recurrences.
\end{proof}

\begin{bibdiv}
\begin{biblist}

\bib{MR666839}{article}{
   author={Arias de Reyna, J.},
   title={Definici\'on y estudio de una funci\'on indefinidamente diferenciable de soporte compacto},
   language={Spanish},
   journal={Rev. Real Acad. Cienc. Exact. F\'\i s. Natur. Madrid},
   volume={76},
   date={1982},
   number={1},
   pages={21--38},
   issn={0034-0596},
   review={\MR{666839}},
   note={{\href{http://arxiv.org/abs/1702.05442}{English version arXiv:1702.05442}}}
}

\bib{MR0197656}{article}{
   author={Fabius, J.},
   title={A probabilistic example of a nowhere analytic $C^{\infty
   }$-function},
   journal={Z. Wahrscheinlichkeitstheorie und Verw. Gebiete},
   volume={5},
   date={1966},
   pages={173--174 (1966)},
   review={\MR{0197656}},
   doi={10.1007/BF00536652},
}

\bib{MR0023257}{article}{
   author={Fine, N. J.},
   title={Binomial coefficients modulo a prime},
   journal={Amer. Math. Monthly},
   volume={54},
   date={1947},
   pages={589--592},
   issn={0002-9890},
   review={\MR{0023257}},
   doi={10.2307/2304500},
}

\bib{H}{article}{
   author={Haugland, J. K.},
   title={Evaluating the Fabius function},
   journal={arXiv:1609.07999},
   date={2016},
}

\bib{MR1501802}{article}{
   author={Jessen, B.},
   author={Wintner, A.},
   title={Distribution functions and the Riemann zeta function},
   journal={Trans. Amer. Math. Soc.},
   volume={38},
   date={1935},
   pages={48--88},
}

\bib{MR705073}{article}{
   author={Kirov, G. Kh.},
   author={Totkov, G. A.},
   title={Distribution of zeros of derivatives of the function $\lambda
   (x)$},
   language={Bulgarian, with English and Russian summaries},
   journal={Godishnik Vissh. Uchebn. Zaved. Prilozhna Mat.},
   volume={17},
   date={1981},
   number={4},
   pages={157--165},
   review={\MR{705073}},
}

\bib{MR1350871}{article}{
   author={Kolodyazhny, V. M.},
   author={Rvach{\"e}v, V. A.},
   title={On some elementary orthogonal wavelet systems},
   language={Ukrainian},
   journal={Dopov. Nats. Akad. Nauk Ukra\"\i ni},
   date={1995},
   number={2},
   pages={20--22},
   issn={0868-8044},
   review={\MR{1350871}},
}
		
\bib{MR1294281}{article}{
   author={Kolodiazhny, V. M.},
   author={Rvach{\"e}v, V. A.},
   title={On the construction of Y. Meyer wavelets using ${\rm up}(x)$
   function},
   language={English, with Russian and Ukrainian summaries},
   journal={Dopov./Dokl. Akad. Nauk Ukra\"\i ni},
   date={1993},
   number={10},
   pages={18--24},
   issn={0868-8044},
   review={\MR{1294281}},
}

\bib{MR0296237}{article}{
   author={Rvachev, V. L.},
   author={Rvach{\"e}v, V. A.},
   title={A certain finite function},
   language={Ukrainian, with English and Russian summaries},
   journal={Dopov\=\i d\=\i\ Akad. Nauk Ukra\"\i n. RSR Ser. A},
   date={1971},
   pages={705--707, 764},
   issn={0201-8446},
   review={\MR{0296237}},
}

\bib{MR0330388}{article}{
   author={Rvach{\"e}v, V. A.},
   title={Certain functions with compact support, and their applications},
   language={Russian},
   conference={
      title={Mathematical physics, No. 13 (Russian)},
   },
   book={
      publisher={``Naukova Dumka'', Kiev},
   },
   date={1973},
   pages={139--149, 194},
   review={\MR{0330388}},
}

\bib{MR550591}{article}{
   author={Rvach{\"e}v, V. A.},
   title={An orthonormal system on the basis of the function $up(x)$},
   language={Russian},
   conference={
      title={Mathematical analysis and probability theory (Russian)},
   },
   book={
      publisher={``Naukova Dumka'', Kiev},
   },
   date={1978},
   pages={146--150, 220},
   review={\MR{550591}},
}

\bib{MR0447908}{article}{
   author={Rvach{\"e}v, V. A.},
   title={Approximation by means of the function ${\rm up}(x)$},
   language={Russian},
   journal={Dokl. Akad. Nauk SSSR},
   volume={233},
   date={1977},
   number={2},
   pages={295--296},
   issn={0002-3264},
   review={\MR{0447908}},
}

\bib{MR1050928}{article}{
   author={Rvach{\"e}v, V. A.},
   title={Compactly-supported solutions of functional-differential equations
   and their applications},
   language={Russian},
   journal={Uspekhi Mat. Nauk},
   volume={45},
   date={1990},
   number={1(271)},
   pages={77--103, 222},
   issn={0042-1316},
   translation={
      journal={Russian Math. Surveys},
      volume={45},
      date={1990},
      number={1},
      pages={87--120},
      issn={0036-0279},
   },
   review={\MR{1050928}},
}

\bib{MR1031820}{article}{
   author={Schnabl, R.},
   title={\"Uber eine $C^\infty$-Funktion},
   language={German, with English summary},
   conference={
      title={Zahlentheoretische Analysis},
   },
   book={
      series={Lecture Notes in Math.},
      volume={1114},
      publisher={Springer, Berlin},
   },
   date={1985},
   pages={134--142},
   review={\MR{1031820}},
}

\bib{MR1271144}{book}{
   author={Stromberg, Karl R.},
   title={Probability for analysts},
   series={Chapman \& Hall Probability Series},
   note={Lecture notes prepared by Kuppusamy Ravindran},
   publisher={Chapman \& Hall, New York},
   date={1994},
   pages={xiv+330},
   isbn={0-412-04171-5},
   review={\MR{1271144}},
   doi={10.1672/08-124.1},
}

\end{biblist}
\end{bibdiv}

\end{document}